\numberwithin{equation}{section}
\newtheoremstyle{fancy1}{10pt}{10pt}{\itshape}{12pt}{\textsc\bgroup}{.\egroup}{8pt}{
}
\newtheoremstyle{fancy2}{10pt}{10pt}{}{12pt}{\itshape}{.}{8pt}{ }
\theoremstyle{fancy1}
\newtheorem{lem}[equation]{Lemma}
\newtheorem{prop}[equation]{Proposition}
\newtheorem*{thm*}{Theorem}
\newtheorem{main}{Theorem}
\newtheorem*{main*}{Theorem}
\newtheorem*{cor*}{Corollary}
\newtheorem*{prop*}{Proposition}
\newtheorem*{problem*}{Problem}
\theoremstyle{fancy2}
\newtheorem{rem}[equation]{Remark}
\newtheorem*{rems*}{Remarks}
\newtheorem*{rem*}{Remark}
\newtheorem*{example*}{Example}
\newcommand{\cref}[1]{Corollary~\ref{#1}}
\newcommand{\pref}[1]{Proposition~\ref{#1}}
\newcommand{\e}{\epsilon}
\newcommand{\RP}{\mathbb{R\mkern1mu P}}
\newcommand{\CP}{\mathbb{C\mkern1mu P}}
\newcommand{\Sph}{\mathbb{S}}
\newcommand{\Disc}{\mathbb{D}}
\newcommand{\C}{{\mathbb{C}}}
\newcommand{\R}{{\mathbb{R}}}
\newcommand{\Z}{{\mathbb{Z}}}
\newcommand{\QH}{{\mathbb{H}}}
\newcommand{\D}{\ensuremath{\operatorname{D}}}
\newcommand{\Pin}{\ensuremath{\operatorname{Pin}}}
\newcommand{\fg}{{\mathfrak{g}}}
\newcommand{\fk}{{\mathfrak{k}}}
\newcommand{\fh}{{\mathfrak{h}}}
\newcommand{\fm}{{\mathfrak{m}}}
\newcommand{\fn}{{\mathfrak{n}}}
\newcommand{\fp}{{\mathfrak{p}}}
\newcommand{\fl}{{\mathfrak{l}}}
\newcommand{\fso}{{\mathfrak{so}}}
\def\con#1=#2(#3){#1 \equiv #2 \bmod{#3}}
\newcommand{\ml}{\langle}                     
\newcommand{\mr}{\rangle}                    
\newcommand{\diag}{\ensuremath{\operatorname{diag}}}
\newcommand{\Ad}{\ensuremath{\operatorname{Ad}}}
\renewcommand{\sec}{\ensuremath{\operatorname{sec}}}
\newcommand{\Ric}{\ensuremath{\operatorname{Ric}}}
\DeclareMathOperator{\Id}{Id}
\begin{document}

\title{Curvature homogeneous manifolds in dimension 4}

\author{Luigi Verdiani}
\address{University of Firenze}
\email{verdiani@math.unifi.it}
\author{Wolfgang Ziller}
\address{University of Pennsylvania}
\email{wziller@math.upenn.edu}
\thanks{The first named author was supported by Prin and GNSAGA grants.  The second named author was supported by a grant from
the National Science Foundation and by a fellowship from CNPq to support his visit at IMPA}

\begin{abstract}
We classify complete curvature homogeneous metrics on simply connected  four dimensional manifolds which are invariant under a cohomogeneity one action. 
\end{abstract}

\maketitle

Let $M$ be a Riemannian manifold.  $M$ is called curvature homogeneous if, for any points $p,q\in M$, there exists a linear isometry $f: T_pM\to T_q M$ that preserves the curvature tensor, i.e. $f^* R_q=R_p$. In \cite{Si} I. Singer asked the question whether such manifolds are always homogeneous. The first complete counter examples were given by K.Sekigawa and H.Takagi  in \cite{Se,Ta}, later generalized in \cite{KTV}. In these examples the metric depends on several  arbitrary functions of one variable and the curvature tensor is equal to the curvature tensor of the isometric product $\QH^2\times\R^{n-2}$, with a flat metric on $\R^{n-2}$. In \cite{Br} it was shown that, in dimension $3$, there are complete examples which achieve any free group as their fundamental group, which can in addition depend on infinitely many arbitrary functions. There are many other local non-homogeneous examples which are curvature homogeneous, especially in dimension 3, see \cite{BKV} and references therein. We point out though that the only known  compact non-homogeneous examples are the Ferus-Karcher-M\"unzner  isoperimetric hypersurfaces in $\Sph^n$, see \cite{FKM}.

It is natural to look for further examples among the class of cohomogeneity one manifolds, i.e. Riemannian manifolds on which a Lie $G$ acts whose generic orbits are hypersurfaces. One such example was discovered by K.Tsukada, \cite{Ts}. It is a complete metric on a two-dimensional vector bundle over $\RP^2$, the normal bundle of the Veronese surfaces  $\RP^2\subset\CP^2$. The Lie group $SO(3)$ acts on it by cohomogeneity one and the metric is given by 
$$
ds^2=dt^2+e^{2t}\, d\theta_1^2  +e^{-2t}\, d\theta_2^2  +(e^t-e^{-t})^2\, d\theta_3^2
$$
where $t$ is the arc length parameter of a geodesic normal to all orbits, and $d\theta_i$ is the dual of the usual basis on the Lie algebra of $SO(3)$.

We will show that this example is indeed very special. 

\begin{main}
	Let $(M,G)$ be a four dimensional simply connected  cohomogeneity one manifold with $G$ a compact Lie group. Then any smooth complete  curvature homogeneous $G$-invariant metric  is either isometric to a symmetric space, or  to the Tsukada example.
\end{main}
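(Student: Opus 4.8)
The plan is to work along a fixed normal geodesic $\gamma$ and to exploit that, since the metric is $G$-invariant and $G$ acts transitively on each orbit, the isometry type of the curvature tensor is constant on orbits; hence curvature homogeneity is equivalent to the statement that the curvature tensors $R_{\gamma(t)}$ all lie in a single orbit of $\O(4)$ acting on algebraic curvature tensors on $T_{\gamma(t)}M\cong\R^4$. First I would invoke the classification of cohomogeneity one actions of compact Lie groups on simply connected $4$-manifolds to reduce to a short list of group diagrams $(G,H,\Kp,\Km)$ with $\dim G/H=3$ and an orbit space that is a closed interval (two singular orbits, $M$ compact) or a half-line (one singular orbit). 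For most of these the isotropy representation of $H$ on $\fm=T_{eH}(G/H)$ is either irreducible or splits into pairwise inequivalent summands, so every $G$-invariant metric is diagonal; writing $g=dt^2+f_1(t)^2\gs_1^2+f_2(t)^2\gs_2^2+f_3(t)^2\gs_3^2$ in an adapted coframe $\gs_i$ of $\fm$ then reduces the problem to an ODE analysis in the three functions $f_i$. The genuinely rich case is $\fg\supseteq\fsu(2)$ with $H$ discrete, where the $\gs_i$ are the Maurer--Cartan forms; the product and abelian cases carry fewer free functions and are handled along the same lines.

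Next I would translate curvature homogeneity into constancy of invariants. In dimension four the curvature operator $\hR\colon\Lambda^2\to\Lambda^2$ is a symmetric endomorphism of a $6$-dimensional space, and for the diagonal ansatz both $\Ric$ and $\hR$ are diagonal in the natural frame, with entries explicit functions of $f_i,f_i',f_i''$. Since the $\O(4)$-orbit of $R_{\gamma(t)}$ is determined by the spectrum of $\Ric$ together with the self-dual and anti-self-dual Weyl eigenvalues and the relative alignment of their eigenspaces, curvature homogeneity forces each of these to be independent of $t$. The cleanest way to organise the constraint is the Lax-type condition $\dot R=[A(t),R]$ for some $\fso(4)$-valued $A(t)$, which expresses that the spectrum of every naturally associated operator is constant along $\gamma$. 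I would first extract the consequences of constancy of the Ricci eigenvalues $\{r_0,r_1,r_2,r_3\}$, and then adjoin the independent Weyl invariants such as $\tr\hR^2$ and $\tr\hR^3$ to saturate the system.

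I would then integrate the resulting overdetermined ODE system, organising the argument by the multiplicities of the Ricci eigenvalues. When the eigenvalues are pairwise distinct, constancy of all invariants should be rigid enough to force $\nabla R=0$, so that $M$ is locally symmetric, and completeness together with simple connectedness identifies it with a symmetric space. The delicate cases are the degenerate ones, in which constancy of the invariants leaves one effective free function and the metric need not be locally symmetric; here the surviving ODE has to be integrated explicitly. This is where the smoothness conditions at the singular orbit(s) become decisive: the $f_i$ must satisfy the standard boundary behaviour that makes $g$ extend smoothly across $G/\Kpm$, and only for one specific degenerate profile do these conditions select a \emph{complete} metric, namely the normal bundle of the Veronese $\RP^2\subset\CP^2$ with $G=\SO(3)$, which yields Tsukada's example.

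The main obstacle I anticipate is precisely this last step: in the degenerate eigenvalue cases curvature homogeneity does \emph{not} imply local symmetry, so one cannot appeal to a rigidity theorem and must instead solve the nonlinear ODE for the remaining function and then confront it with the smoothness and completeness constraints. Enumerating the admissible singular isotropy pairs $(H,\Kpm)$, writing down the correct smoothness conditions for each, and determining which degenerate curvature profiles are compatible with a globally smooth and complete metric is the technical heart of the argument, and is what ultimately forces the dichotomy between symmetric spaces and the single Tsukada example.
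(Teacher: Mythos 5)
There are several genuine gaps. First, your reduction to diagonal metrics is unjustified in exactly the cases that matter. For $G=SU(2)$ acting with finite principal isotropy $H$ (e.g.\ $H=\{e\},\Z_2,\Z_4,D_2^*$), the isotropy representation has equivalent or trivial summands and a $G$-invariant metric depends on up to six functions, not three. The paper's Section~2 is devoted to showing that one can only \emph{partially} diagonalize the metric, via an equivariant diffeomorphism built from a curve in $N(H)/H$, and only on intervals where the eigenvalue multiplicities of $P_t$ are constant; Section~6 then needs a separate patching argument (comparing with the explicit list of diagonal solutions on a maximal interval of distinct eigenvalues) to globalize. You cannot simply ``write $g=dt^2+\sum f_i^2\sigma_i^2$'' and proceed. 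Second, your proposed criterion for curvature homogeneity --- constancy of the Ricci spectrum, the Weyl eigenvalues and ``relative alignments'' --- is not a workable substitute for the actual condition: deciding when two algebraic curvature tensors lie in one $\O(4)$-orbit is not reduced to a finite list of scalar invariants you ever make explicit, and constancy of scalar invariants is in general strictly weaker than curvature homogeneity. The paper instead uses the equivalent formulation $\nabla R=A\cdot R$ for a skew-symmetric $A$, which along the normal geodesic becomes the ODE \eqref{diffeq}; for diagonal metrics the sparsity pattern of \eqref{curvature} forces the right-hand side to vanish, so \emph{all components} of $R$ in the parallel frame are constant. That much stronger conclusion is what makes the functions $v_i$ trigonometric, hyperbolic or linear and renders the problem algebraic.

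Third, and most concretely, your expected dichotomy --- pairwise distinct Ricci eigenvalues force $\nabla R=0$, while only degenerate spectra can produce the non-symmetric example --- is false. The Tsukada metric has $v_1=2(e^t-e^{-t})$, $v_2=2e^t$, $v_3=2e^{-t}$, so for generic $t$ the three eigenvalues are pairwise distinct, yet the metric is not locally symmetric ($|\nabla\Ric|$ is non-constant). So the rigidity you hope for in the ``generic'' case does not hold, and the case division by eigenvalue multiplicity does not isolate the exceptional example. The actual mechanism in the paper is different: after establishing constancy of the curvature components, one solves finitely many algebraic equations in the coefficients of the trigonometric/hyperbolic/linear ansatz, using the smoothness conditions of \cite{VZ} at a singular orbit (the integer $a=|H\cap K_0|$) to cut down the possibilities, and the Tsukada solution appears as one root of these equations in the case $a=4$, $c=-1$, alongside the symmetric ones. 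Finally, you do not address the case where all orbits are regular, which requires its own (non-existence) argument.
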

Notice that this includes the case of warped product metrics of the form $dt^2+g_t$ on $\R^4$ where $g_t$ is an arbitrary left invariant metric on $SU(2)$, thus depending on $6$ functions of one variable. 
\smallskip

One can describe the proof as follows. The condition of being curvature homogeneous reduces to an ODE along the normal geodesic in terms of the metric and a $G$-invariant connection. Since this  system of ODE's depends on 9 functions, it is too complicated to solve for the metric directly.
So we first discuss the case of diagonal metrics. In this case we will show  curvature homogeneity implies that the components of the curvature tensor are constant, which will imply that the metric is described by  linear combinations of trigonometric, hyperbolic and linear functions. This gives rise to finitely many algebraic equations. If there exists a singular orbit, one can furthermore use the required smoothness conditions at the singular orbit, which makes it fairly straight forward to solve the  system of equations.  The result  is that the metric is equivariantly isometric to one of the 13  cohomogeneity one actions on the  4-dimensional symmetric spaces, or to the Tsukada example.

 For a general metric we first prove that, using an equivariant diffeomorphism,  the matrix of functions describing the metric can be partially diagonalized. This fact is true for  any cohomogeneity one metric in all dimension and seems to be new. It may thus be of independent interest for other problems on cohomogeneity one manifolds.  In our case this enables us to show that there exists an interval on the regular part (but not necessarily including the singular orbit) where the metric is diagonal. The resulting equations can  be solved again, although the computations are now significantly more complicated since we cannot reduce the problem by using the smoothness conditions at a singular orbit. Surprisingly, even in this general situation, where we are only looking for local solutions, the problem is very rigid. The solutions either extend smoothly to one of the examples in Theorem A, or they belong to five special families, see Examples 5, 9 and 10 in Section 3. They depend on a parameter $a$ and, if $a$ is an integer, it represents an orbifold solution, one of which is the smooth example.

The paper is organized as follows. In Section 1 we review the geometry of cohomogeneity one manifolds and discuss the condition of being curvature homogeneous. In Section 2 we show when the matrix of functions describing the metric can be partially diagonalized.   In Section 3 we describe the known examples, in terms of the functions describing the metric, of the 4 dimensional cohomogeneity one metrics on symmetric spaces, as well as the example by Tsukada. In Section 4 we discuss the smoothness conditions and in Section 5 solve the case when the metric is diagonal. In Section 6  we discuss the general case.

We would also like to thank the referee for  many helpful suggestions.

\section{Preliminaries}

\bigskip

Let $M$ be a Riemannian manifold. Then $M$ is curvature homogeneous if, for any $p,q\in M$, there exists a linear isometry $f: T_pM\to T_q M$ that preserves the curvature tensor, i.e. $f^* R_q=R_p$. This is equivalent (cfr. \cite{TV}) to the existence of a metric linear connection $\D$ such that $D R=0$. Parallel translation with respect to $D$ can then be chosen as the isometry $f$ above. Denote by $\nabla$ the Levi-Civita connection of $M$, and let $A=\nabla-D$. Then $A$ satisfies
\begin{equation}\label{curvhom}
\nabla R=A\cdot R.
\end{equation}
To prove the existence of $D$ one can simply argue as follows. Being curvature homogeneous implies that locally there exists an orthonormal frame field with respect to which the components of the curvature tensor are constant. Declaring this frame to be parallel gives a local solution to \eqref{curvhom}  and one can then use a partition of unity to define a global connection.
Vice versa, the existence of a tensor $A$ that satisfies  \eqref{curvhom} implies that $M$ is curvature homogeneous.

If  $G$ acts by isometries such that $\dim (M/G)=1$, a so called cohomogeneity one manifold, we can average the connection to make it $G$-invariant, and still satisfy \eqref{curvhom}, see \cite{V}. Thus it is sufficient to define the metric and the tensor $A$ only along a geodesic. I.e., if $\gamma(t)$ is an arc length parameterized  geodesic orthogonal to all hypersurface orbits of $G$, then $M$ is curvature homogeneous if and only if there exists a skew-symmetric $(1,1)$-tensor $A(t)$, defined at the regular points of $\gamma$, such that
\begin{eqnarray}\label{diffeq}
	\label{cheq}
	\frac d{dt}\,R(E_i,E_j,E_k,E_l)&=&\sum_m a_i^m R(E_m,E_j,E_k,E_l)+a_j^m R(E_i,E_m,E_k,E_l)+\\ \notag
	&+&a_k^m R(E_i,E_j,E_m,E_l)+a_l^mR(E_i,E_j,E_k,E_m)
\end{eqnarray}
where $A(E_i)=\sum a_i^m E_m$ for a $\nabla$ parallel orthonormal basis $E_i(t)$ of the tangent space at $\gamma(t)$.  Thus if \eqref{diffeq} is satisfied along the geodesic $\gamma$, the action of $G$ guarantees that it is satisfied on all of $M$. Notice also that the tensor $A$ needs to be defined only at the regular points, and we need to check \eqref{diffeq} only at these regular points, since the metric is then curvature homogeneous on all of $M$ by continuity. This avoids the technical issues of having to consider smoothness conditions at the singular point for $A$, in fact $A$ may not even extend continuously to $M$. As we will see, it is also important for us to be able to change the normal geodesic $\gamma$ by an equivariant diffeomorphism in order to simplify the computations.

\smallskip

Before we continue, let us recall the general structure of a cohomogeneity one manifold see, e.g.,  \cite{AA,AB} for a general reference. Since we assume that $M$ is simply connected, it follows that there are no exceptional orbits. The case where all orbits are regular is special and will be solved separately, see Section 6 case (b). So from now on we will assume that there exist some orbits which are singular.
In that case a non-compact cohomogeneity one manifold is given by a homogeneous vector bundle
and a  compact one by  the union of two
homogeneous disc bundles. In our proof it will sufficient to solve the differential equation on a disc bundle near one singular orbit.  The solutions will then determine whether the metrics extends to a complete metric on a vector bundle, or to a compact manifold. We will thus, from now on, also assume that we work simply on a single disc bundle. To describe the disc bundle, let    $H,\, K ,\, G
$ be compact Lie groups with inclusions $H\subset K \subset G$ such that
$K/H=\Sph^{\ell}$ for some $\ell>0$. The transitive action of $K$ on
$\Sph^{\ell}$ extends (up to conjugacy) to a unique linear action on  $V=\R^{{\ell}+1} $, see e.g. \cite{Be}, Theorem 7.50.
We can thus define the homogeneous vector bundle
$M=G\times_{K}V=\{[g,v]\mid (g,v)\sim (gk^{-1},kv) \text{ for any } k\in K \}$. $G$  acts on $M$  via $\bar g [g,v]=[\bar g\cdot g,v]$. A disc $\Disc\subset V$ can be viewed as the slice of the $G$ action since, via the exponential map,  it can be identified $G$-equivariantly with a submanifold of $M$ orthogonal to the singular orbit. Let $p_0=[e,0]$ be a point in the singular orbit $G\cdot p_0=\{[g,0]\mid g\in G\}\simeq G/K$ and $\gamma(t)=[e,t e_0]$ a line in the slice $V$. Then the stabilizer group of $G$ along $\gamma(t)$ is equal to $K$ at $p_0$ and constant equal to $H$ at $\gamma(t)$ for  $t>0$. Under the exponential map the image of $\gamma$ is a geodesic in $M$ orthogonal to all orbits. It is thus sufficient to describe $G$-invariant metrics on $M$ only along  $\gamma(t)$  since $G\cdot\gamma=M$. Conversely, given a cohomogeneity one manifold $M$, the slice theorem implies that the manifold in the neighborhood of a singular orbit has the above form after we choose a normal geodesic $\gamma$ orthogonal to the singular orbit $G/K$ with $\gamma(0)=p_0$.

\smallskip

 We fix a bi-invariant metric $Q$ on $\fg$, which defines a $Q$-orthogonal $\Ad_H$-invariant splitting $\fg=\fh\oplus\fn$. The tangent space $T_{\gamma(t)} (G\cdot \gamma(t))={\gamma'}^\perp\subset T_{\gamma(t)}M$,  is then identified with $\fn$ for $t>0$
via action fields: $X\in\fn\to
X^*(\gamma(t))$. $H$ acts on $\fn$ via the adjoint representation
and a $G$-invariant metric on $G/H$ is described by an $\Ad_H$-invariant inner product on $\fn$. For $t>0$ the metric along $\gamma$
is thus given  by $g=dt^2+h_t$ with $h_t$ a one parameter family of $\Ad_H$-invariant inner products on the vector space $\fn$, depending smoothly on $t$. Conversely, given such a family of inner products $h_t$, we define the metric  on the regular part of $M$ by using the action of $G$. We  describe the metric in terms of a one parameter family of self adjoint endomorphisms:
$$
P_t\colon \fn\to\fn,\quad g(X^*(\gamma(t)),Y^*(\gamma(t)))=Q(P_tX,Y) \text{ for all } X,Y\in\fn.
$$
Since $\Ad_H$ acts by isometries in $g$ and $Q$, $P_t$ commutes with $\Ad_H$.

We choose an $\Ad_H$-invariant  splitting
$$
\fn=\fn_0\oplus\fn_1\oplus\ldots\oplus\fn_r,
$$
where $\Ad_H$ acts trivially on $\fn_0$ and irreducibly on $\fn_i$ for $ i>0$. On  $\fn_i, i>0$, the inner product $h_t$ is  a multiple of $Q$, whereas on $\fn_0$ it is arbitrary. Furthermore, $\fn_i$ and $\fn_j$ are orthogonal if the representations of $\Ad_H$ are inequivalent. If they are equivalent, inner products are described by $1,2$ or $4$ functions, depending on whether the equivalent representations are orthogonal, complex or quaternionic.

Next, we  choose a  basis $X_i$ of $\fn$, adapted to the above decomposition, and thus the metrics $h_t$ are described by a collection of smooth functions $g_{ij}(t)=g(X_i^*(\gamma(t)),X_j^*(\gamma(t)))=Q(P_tX_i,X_j)$, $\ t> 0$.  In order to be able to extend this metric smoothly to the singular orbit, they must  satisfy certain  smoothness conditions at $t=0$, which are discussed in \cite{VZ}.

\smallskip

Choosing an $\Ad_K$-invariant complement to $\fk\subset\fg$, we obtain the $Q$-orthogonal decompositions
$$\fg=\fk\oplus \fm  , \quad  \fk=\fh\oplus \fp \ \text{ and thus }\ \fn=\fp \oplus \fm . $$
where we can also assume that $\fn_i\subset\fp$ or $\fn_i\subset\fm$.   Here $\fm$ can be viewed as the tangent space to the singular orbit $G/K$ at
$p_0=\gamma(0)$ and $\fp$ as the tangent space of the sphere $K/H\subset V$. $K$ acts via the
isotropy action $\Ad(K)_{| \fm}$ of $G/K$ on $\fm$ and via the slice
representation on $V$. The (often ineffective) linear action of $K$ on $V$ is determined by the fact that $H$ is the stabilizer group at $\gamma(t),\ t>0$ and $K/H=\Sph^\ell$. We will also discuss, in Section 6, the case where  
all orbits are regular.

It is important for us to note that the tubular neighborhood $G\times_{K}D$ is not only defined in a neighborhood of the singular orbit, but in fact for all $t$ in the complete non-compact case, and until it reaches the second singular orbit in the compact case.  Once we solve the condition for being curvature homogeneous near the singular orbit,  the metric on $M$ is well defined   since the condition 
involving the tensor $A$ is a regular ODE for $t>0$. In practice, we will recognize the metric in a neighborhood of the singular orbit as a known example, and hence they must agree globally.

\section{Use of the normalizer}
We will not try to solve the system \eqref{cheq} for the most general $G$-invariant metric on $M$ since the conditions are too complicated. Instead we will use the degrees of freedom given by the action of the $G$-equivariant diffeomorphisms on $M$ to show that, if a solution exists, then it is possible to find an isometric solution s.t. the expression of the metric is simpler.

\subsection{Change of the metric on a homogeneous space}
Let $N$ be a Riemannian manifold on which a compact Lie group $G$ acts transitively, almost effectively, and by isometries. If we fix a point $p_0\in N$, and let $H$ be the isotropy subgroup at $p_0$, then $N$ can be identified with $G/H$.  We fix a bi-invariant metric
$Q$ on $G$ and  a $Q$-orthogonal decomposition $\fg=\fh+\fn$ of $\fg$ such that $T_{p_0}N \simeq \fn$ via action fields.
The $G$-invariant  metric $g$ on $N$ is identified with 
an endomorphism  $ P\colon \fn\to\fn$ via $g(X^*(p_0),Y^*(p_0))=Q(P (X),Y)$ for all $ X,Y\in\fn$.

The normalizer $L=N^G(H)/H$ acts, after fixing a base point, on the right, i.e. $R_n(gH)= gn^{-1}H$, or equivalently $R_n(gp_0)=gn^{-1}p_0$ for $n\in L$. $L$ acts freely on $N$ and transitively on the fixed point set $N^H$ and thus $N^H\simeq L$.
The Lie algebra of $N(H)$ is the centralizer of $\fh$ in $\fg$ and hence the Lie algebra of $L$ can be identified with the subalgebra
$\fn_0=\{ X\in \fn \mid Ad(h)X=X  \text{ for all } h \in H \}$ in $\fg$. Notice that $\Ad(g)(\fn_0)\subset\fn_0$ for all $g\in N(H)$ and hence  $L$ acts on $\fn_0$ via the adjoint representation. We can use this action to simplify the metric on $\fn_0$.  

 First notice that for all $p\in N^H$ the $Q$- orthogonal complement $\fn$ of $\fh$ in $\fg$ is independent of $p$ (in fact this is true only for points in $N^H$). Hence we  have the identification via action fields $T_{q}N \simeq \fn$ for all $q\in N^H$ for a {\it fixed} subspace $\fn$.

  We now define a new metric $g'=R_n^*(g)$ for which we have: 
 \begin{align*}
 & Q(P'_t(X),Y)= g'(X^*(p_0),Y^*(p_0))=g\left(d(R_n)_{p_0}(X^*(p_0)),d(R_n)_{p_0}(Y^*(p_0)) \right)  \\
 =& g(X^*(n^{-1}p_0),Y^*(n^{-1}p_0))=g(d(L_{n^{-1}})_{p_0}((\Ad(n)X)^*(p_0)),
 d(L_{n^{-1}})_{p_0}((\Ad(n)Y)^*(p_0)))\\
 = &g((\Ad(n)X)^*(p_0),(\Ad(n)Y)^*(p_0))=Q(P_t\Ad(n)X,
\Ad(n)Y)=Q(\Ad(n^{-1})\,P_t\,\Ad(n)X,Y)
 \end{align*}
 since $ X^*(gp)= d(L_g)_{p} \left((\Ad(g)X)^*(p)\right)$ and 
 $d(R_n)_{p_0}(X^*(p_0)) =X^*(n^{-1}p_0)$.
 
Thus the change in the metric endomorphism is given by
\begin{equation}\label{change}
P'_t=\Ad(n^{-1})\,P_t\,\Ad(n).
\end{equation}
Notice that this can also be interpreted as changing the base point for $g$ from $p_0$ to $n^{-1}p_0$. 

 In particular, if $\Ad(L)_{|n_0}
=SO(\fn_0)$ then we can assume that $P_{|\fn_0}$ is diagonal with respect to a $Q$ orthonormal basis. Notice though that this is possible only if the identity component $L_0$ is isomorphic to  $SO(3)$ or $SU(2)$. Indeed, the action of $L$ on $\fn_0$ is the adjoint action of $L$ on its Lie algebra and if the image is the full orthogonal group,  the maximal torus has dimension $1$ since every vector of unit length can be conjugated into a fixed vector. This implies that the maximal abelian subalgebra of $\fl$ is one dimensional.

\subsection{ Change of the metric on a Cohomogeneity one manifold}
Let $M$ be a cohomogeneity one Riemannian $G$-manifold as in Section 1, with $p_0=\gamma(0)\in G/K$, principal isotropy group $H$, and  $Q$-orthogonal decompositions
$\fg=\fh\oplus\fn=\fh+\fp+\fm$. This  induces the identification $T_{\gamma(t)} G\cdot \gamma(t)\simeq \fp+\fm$  via action fields for all regular points.

We also have the subspaces $\fp_0\subset\fp$ and $\fm_0\subset\fm$ on which $\Ad_H$ acts as $\Id$, and $L=N(H)/H$ acts on $\fp_0\oplus\fm_0$ via its adjoint representation.  As in the previous section we can try to change the metric on $\fp_0\oplus\fm_0$ using the action of $\Ad_L$. We want to understand if we can do this for all points along the normal geodesic smoothly. This will be easy on the regular part, but the smoothness at $t=0$ is more delicate. Notice that for the metric endomorphisms $P_t$ we have that $P_t(\fp_0\oplus\fm_0)\subset \fp_0\oplus\fm_0$ by Schur's Lemma, since $P_t$ commutes with $\Ad_H$.

\begin{prop}\label{normalize}
	Let $(M,G)$ be a  cohomogeneity one manifold with $L=N(H)/H$.
	\begin{enumerate}
		\item[(a)] Assume that $L_0=N(H)/H$ isomorphic to $SO(3)$ or $SU(2)$. If the action has a singular orbit at $t=0$, then there exists an $\e>0$ such that any cohomogeneity one metric is $G$-equivariantly isometric to one where $Q(P_t(\frak{p}_0),\frak{m}_0 )=0$  on  $(0,\e)$.
		\item[(b)] If $L_0=N(H)/H$ isomorphic to $SO(3)$ or $SU(2)$ and there exists an interval $[a,b]$ with $a>0$ on which the eigenvalues of ${P_t}_{|\fn_0}$  have constant multiplicity, then the metric is $G$-equivariantly isometric to one where ${P_t}_{|\fn_0}$ is diagonal  for all $t\in (a,b)$.
		\item[(c)]  Assume that $L_0=N(H)/H$ is isomorphic to $SO(3)$,  $SU(2)$ or $SO(2)$ respectively, and that  there exists a three dimensional, respectively two dimensional $\Ad_H$-invariant  subspace   $\fm_1\subset \fn_0^\perp\cap(\fp\oplus\fm)$ which is also invariant under $\Ad_{N(H)}$.  If there exists an interval $[a,b]$ with $a>0$ on which the eigenvalues of
		${P_t}_{|\fm_1}$ have constant multiplicity, then the metric is $G$-equivariantly isometric to one where ${P_t}_{|\fm_1}$ is diagonal  for all  $t\in (a,b)$.
	\end{enumerate} 
\end{prop}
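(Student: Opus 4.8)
The common mechanism is the $t$-dependent version of \eqref{change}. A $G$-equivariant diffeomorphism of the disc bundle which maps every orbit to itself restricts on each regular orbit $G/H$ to a $G$-equivariant self map, hence to right translation $R_{n(t)}$ by some $n(t)\in L=N(H)/H$; conversely a smooth curve $t\mapsto n(t)\in L$ which extends smoothly across the singular orbit assembles into such a diffeomorphism, and pulling back the metric by it replaces the metric endomorphism by
\begin{equation*}
P'_t=\Ad(n(t)^{-1})\,P_t\,\Ad(n(t)),
\end{equation*}
exactly as in \eqref{change} but now allowing $t$-dependence. The algebraic input is the observation at the end of Section 2.1: when $L_0\cong SO(3)$ or $SU(2)$ the adjoint action identifies $\Ad(L_0)_{|\fn_0}$ with the full group $SO(\fn_0)$ of the $3$-dimensional space $\fn_0\cong\fl$, and for $L_0\cong SO(2)$ acting on a $2$-dimensional space one gets $SO(2)$; in case (c) the hypothesis that $\fm_1$ is $\Ad_{N(H)}$-invariant guarantees that $\Ad_L$ acts on $\fm_1$, again (when the action is nontrivial) as the full $SO(\fm_1)$. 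Thus conjugation by $\Ad(L_0)$ realizes an arbitrary rotation of the relevant space, and the whole problem reduces to choosing that rotation smoothly in $t$ and, in (a), smoothly across $t=0$.

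I would prove (b) and (c) first, since on $[a,b]$ with $a>0$ there is no singular orbit and any smooth $n(t)$ automatically gives a smooth equivariant diffeomorphism. Let $V$ denote $\fn_0$ in case (b) and $\fm_1$ in case (c). On an interval where the eigenvalues of ${P_t}_{|V}$ keep constant multiplicity, the eigenprojections depend smoothly on $t$ by standard perturbation theory, so one can choose a smooth $Q$-orthonormal frame $e_i(t)$ of eigenvectors; reordering if necessary, it has constant (say positive) orientation. The transition from the fixed $Q$-orthonormal basis to $e_i(t)$ is then a smooth curve $A(t)$ in $SO(V)$. Because $\Ad\colon L_0\to SO(V)$ is a finite covering homomorphism onto $SO(V)$ (an isomorphism for $L_0\cong SO(3)$, the double cover $SU(2)\to SO(3)$ for $L_0\cong SU(2)$, and a covering of $SO(2)$ in case (c)) and $[a,b]$ is an interval, $A(t)$ lifts to a smooth path $n(t)\in L_0$. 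With this choice ${P'_t}_{|V}$ is diagonal on $(a,b)$, proving (b) and (c).

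For (a) the target is weaker, namely only the off-diagonal block $Q(P_t(\fp_0),\fm_0)$, precisely because the eigenvalue multiplicities of ${P_t}_{|\fn_0}$ may degenerate as $t\to 0$, so full diagonalization as in (b) need not survive to the singular orbit. Killing this block is equivalent to asking that the smaller of the two summands, say the line $\fp_0$ (the collapsing $K/H$-direction), be an eigendirection of ${P_t}_{|\fn_0}$, i.e. that $\fp_0$ be $P_t$-invariant inside $\fn_0$. The plan is to use the smoothness expansion of $P_t$ at $t=0$ from \cite{VZ}: the collapsing directions and the orbit directions enter $P_t$ at different orders in $t$, so for small $t$ the eigenvalue attached to the $\fp_0$-direction is simple and its eigenline is a smooth function of $t$ lying close to $\fp_0$ and tending to $\fp_0$ as $t\to 0$. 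Rotating $\fp_0$ onto this eigenline by a smooth $n(t)\in L_0$ with $n(t)\to e$ as $t\to 0$ then kills the block on some interval $(0,\e)$, and the limit $n(0)=e$ is what should allow the resulting diffeomorphism to be smooth across the singular orbit.

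The main obstacle is exactly this last point: verifying that the gauge $n(t)$ constructed on the regular part extends to a smooth $G$-equivariant diffeomorphism of the disc bundle at $t=0$. Unlike in (b) and (c), where the assumption $a>0$ removes the singular orbit from consideration, here one must match $n(t)$ to the slice representation of $K$ as $t\to 0$ and check the required parity and vanishing order of the off-diagonal block against the smoothness conditions of \cite{VZ}. This is where the real work lies, and it is also the reason one obtains only block-diagonalization on a possibly small interval $(0,\e)$ rather than full diagonalization up to $t=0$.
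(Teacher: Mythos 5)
Your mechanism is the right one, and your treatment of parts (b) and (c) essentially coincides with the paper's: choose a smooth orthonormal eigenframe $Y_i(t)$ of ${P_t}_{|V}$ on the interval (possible by constant multiplicity), realize the frame change by a smooth curve $n_t\in L$ with $\Ad(n_t)X_i=Y_i(t)$, and pull back by the equivariant diffeomorphism $\phi(g\gamma(t))=gn_t^{-1}\gamma(t)$ so that $\bar P_t=\Ad(n_t^{-1})P_t\Ad(n_t)$ is diagonal. (Two small omissions there: you must cut $n_t$ off to equal $e$ outside a slightly larger interval so that $\phi$ extends to all of $M$, and the paper gets smooth dependence of $n_t$ more directly from uniqueness rather than via lifting through the covering $L_0\to SO(V)$; neither is a real problem.)

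Part (a), however, has a genuine gap, and you have correctly located it but not closed it: the well-definedness and smoothness of $\phi$ at the singular orbit. The missing idea is the Weyl group element $\sigma\in K\cap N(H)$ with $\sigma\gamma(t)=\gamma(-t)$ and $\sigma^2\in H$. Since each regular orbit near the singular one meets the normal geodesic in the two points $\gamma(t)$ and $\gamma(-t)=\sigma\gamma(t)$, the assignment $g\gamma(t)\mapsto gn_t^{-1}\gamma(t)$ is consistent (and hence extends smoothly over the whole slice disc, not just the ray $te_0$, $t>0$) only if $n_{-t}=\sigma\,n_t\,\sigma^{-1}$ modulo $H$. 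The paper forces this by first decomposing $\fp_0\oplus\fm_0$ into the $\pm1$ eigenspaces of $\Ad(\sigma)$, using $P_{-t}=\Ad(\sigma)P_t\Ad(\sigma)^{-1}$, and then building the eigenframe to satisfy the symmetry $Y_i(-t)=\e_i\Ad(\sigma)Y_i(t)$ (by averaging a provisional $\bar Y_2$ with $\e_2\Ad(\sigma)\bar Y_2(-t)$, etc.); the relation $n_{-t}=\sigma n_t\sigma^{-1}$ then follows and the extension over the singular orbit is immediate in slice coordinates, with no need to inspect parities or vanishing orders of the off-diagonal block against the conditions of \cite{VZ}. Your proposed substitute --- matching $n(t)$ to the slice representation by checking vanishing orders from the smoothness conditions --- is not carried out and is not how the difficulty is resolved; without the $\sigma$-equivariant frame your $\phi$ is not even well defined on the regular part of the tube. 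You also implicitly assume $\dim\fp_0=1$; the reduction to that case (since $\dim\fp_0\in\{0,1,3\}$ and the other two cases are vacuous) should be stated, as it is what makes the relevant eigenvalue simple near $t=0$.
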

\begin{proof}(a)
	The assumption implies that $\dim (\fp_0\oplus\fm_0)=3$ and that $\Ad_L$ acts transitively on all $Q$-orthonormal bases in $\fp_0\oplus\fm_0$. We will use this freedom to change the metric. For the transitive actions on spheres we have $\dim\fp_0=0,1 $ or $3$. If $\dim\fp_0=0 $ or $3$ there is nothing to prove. Thus we can assume, from now, on that $\dim\fp_0=1 $.  Notice that this happens precisely when $K$ contains a normal subgroup isomorphic to $U(n)$, $SU(n)$ or $Sp(n)\cdot T^1$  acting linearly on the slice.
	
	Along a normal geodesic the stabilizer group is constant at all regular points and hence, after fixing the choice of a principal isotropy group $H$, we can assume that for all $G$-invariant metrics a normal geodesics lies in $M^H$.

	The normalizer  $L=N(H)/H$ acts on the fixed point set $M^H$ (on the left) and under this action $M^H$ is also a cohomogeneity one manifold with $M/G=M^H/L$.
	On the regular part $M^H_{reg}$ the stabilizer group is constant equal to $H$ and hence we have the identification via action fields $T_q(G\cdot q)\simeq \fp\oplus \fm$   for all $q\in M^H_{reg}$ with respect to a {\it fixed} subspace $ \fp\oplus\fm\subset \fg$.  Under this identification $d({L_g}_{|M_{reg}})_p=\Ad(g)_{|\fp\oplus \fm}$ for all $p\in  M^H_{reg} $ and $g\in N(H)$ since $d(L_g)_{p_0}(X^*(p_0))=(\Ad(g)X)^*(gp)$.
	
	We also have an action of $L$ on the right on $M_{reg}$ (after fixing the geodesic $\gamma$)  via $g\gamma(t)\to gn^{-1}\gamma(t)$ for $n\in L$. It  acts freely on $M_{reg}$ and transitively on each $L$-orbit in the fixed point set $M^H_{reg}$. In general this action will not extend to all of $M$ though, unless $n$ also normalizes $K$.

		We will make use of the  Weyl group element $\sigma\in K$ defined by $\sigma(p_0)=p_0$ and  $d(L_{\sigma})_{p_0}(\gamma'(0))=-\gamma'(0)$, and thus $\sigma(\gamma(t)=\gamma(-t)$. Clearly, this defines $\sigma$ uniquely $\!\!\!\!\mod H$, and $\sigma^2\in H$ as well as $\sigma \in N(H)$. 
	According to the above, we also have $d({L_\sigma}_{|M_{reg}^H})_p=\Ad(\sigma)_{)_{\fp_0\oplus\fm_0}}$, which will be useful for us when $p=\gamma(t)$.

	Since $\Ad(\sigma)$ normalizes $\Ad_H$, it preserves $\fp_0\oplus\fm_0$, and since $\sigma^2\in H$, it follows that  $\Ad(\sigma^2)=\Id$ on $\fp_0+\fm_0$. Hence 	 $\fp_0+\fm_0$ is the sum of two eigenspaces $W_-$ and $W_+$ of $\Ad(\sigma)$ corresponding to the eigenvalues $\pm 1$.

	 The geodesic $\gamma(t)$ and the metric $P_t$ are defined and smooth on an interval around $t=0$ and satisfy
	$$P_{-t}=\Ad(\sigma) P_t\Ad(\sigma)^{-1}$$
	since $\Ad(\sigma)=d(L_\sigma)_{\gamma(t)}\colon T_{\gamma(t)}M\to T_{\gamma(-t)}M$ is an isometry and takes $\gamma(t)$ to $\gamma(-t)$.
	Notice that, although $\sigma$ is only defined mod $H$, this is well defined since $\Ad_H$ commutes with $P_t$.
	
	By assumption $\dim\fp_0=1$ and hence there exists a unique eigenvalue $\lambda_1(t)$ of $P_t$ with $\lim_{t\to 0}\lambda_1(t)=0$. Hence we can choose a $\delta>0$ such that for $t\in(-\delta,\delta)$ the eigenvalue $\lambda_1(t)$ is distinct from all other eigenvalues of $P_t$. We restrict the remaining discussion to this interval only.  Thus
	$\lambda_1(t)$ is a smooth function  (see e.g. \cite{L}, Thm. 8, pg. 130) and there exists a smooth eigenvector $Y_1(t)\in\fp_0$   which we normalize to have unit length in $Q$. It follows that $\Ad(\sigma)Y_1(t)$ is an eigenvector of $P_{-t}$ with eigenvalue $\lambda_1(t)$ and hence there exists an $\e_1=\pm 1$, independent of $t$, such that
	$$Y_1(-t)=\e_1\Ad(\sigma) Y_1(t).$$
	We want to show that this equation holds for a $Q$ orthonormal basis of $\fp_0\oplus\fm_0$, which will imply \eqref{normalizer}, and is crucial in defining the equivariant diffeomorphism $\phi$ later on.
	
	Fix a $Q$-orthonormal basis $X_i\in \fp_0\oplus\fm_0$ such that $\Ad(\sigma)X_i=\e_i X_i$ with $\e_i=\pm 1$ and $X_1=Y_1(0)$. We claim that we can extend $X_i$ to three $Q$-orthonormal vector fields $Y_i(t)$ along $\gamma$ with $Y_i(t)\in \fp_0\oplus\fm_0$ and 
	\begin{equation}\label{Yi}
	Y_i(-t)=\e_i\Ad(\sigma) Y_i(t) \text{ and } Y_i(0)=X_i, \quad i=1,2,3.
	\end{equation}
	We already defined the vector field $Y_1(t)$ satisfying this equation. Next choose a vector field $\bar Y_2(t)\in \fp_0\oplus\fm_0$ orthogonal to $Y_1(t)$ with $\bar Y_2(0)=X_2$ and define
	\begin{eqnarray*}
		Y_2(t)=\frac12(\bar Y_2(t)+\e_2\Ad(\sigma)\bar Y_2(-t))
	\end{eqnarray*}
	Notice this implies that $Y_2$ satisfies \eqref{Yi} with $Y_2(0)=X_2$. We also have
	\begin{align*}
	&2 Q(Y_1(t),Y_2(t))= Q(Y_1(t),\bar Y_2(t)+\e_2\Ad(\sigma)\bar Y_2(-t))=
	 Q(Y_1(t),\e_2\Ad(\sigma)\bar Y_2(-t))\\
	&=  Q(\Ad(\sigma)Y_1(t),\e_2\bar Y_2(-t))= Q(\e_1 Y_1(-t),\e_2\bar Y_2(-t))=0
	\end{align*}
	Similarly, we can find a vector field $Y_3(t)\in \fp_0\oplus\fm_0$ satisfying \eqref{Yi} and orthogonal to $Y_1(t)$ and $Y_2(t)$. Finally, normalize $Y_i(t)$ to have unit length in $Q$.
	
	Since $\Ad_L$ acts  transitively on the set of $Q$-orthonormal basis in $\fp_0\oplus\fm_0$, there exists an element $n_t\in L$ such that $$\Ad(n_t)X_i=Y_i(t) \text{ for } t\in(-\e,\e).$$
	Furthermore, $n_t$ is uniquely determined since the center of $L$ is finite and $n_0=e$.
	Thus $n_t$ depends smoothly on $t$ since $\Ad(n_t)_{\gamma(t)}$ is an isomorphism for all $ t\in(-\e,\e)$.  Finally, we choose $\e'>0$ such that $n_t=e$ for $t>\e+\e'$. Here we point out that $X_i$ should be interpreted as action fields along $\gamma$, whereas $Y_i(t)$ are simply vector fields along $\gamma$.
	
	We next observe that the curve $n_t$ has the crucial property
	\begin{equation}
	\label{normalizer}
	n_{-t}=\sigma\cdot n_t\cdot \sigma^{-1}
	\end{equation}
	modulo elements of $H$, since \eqref{Yi} implies that
	\begin{align*}
	\Ad(n_{-t})\Ad(\sigma)X_i=\e_i\Ad(n_{-t})X_i=\e_i Y_i(-t)=\Ad(\sigma)Y_i(t)=\Ad(\sigma)\Ad(n_{t})X_i
	\end{align*}
	for all $i$.
	
	Using the curve $n_t$ in $L$, we define a map $\phi:M_{reg}\to M_{reg}$ by using the right action of $L$ on each orbit:
	$$\phi(g\gamma(t))= gn_t^{-1}\gamma(t),\quad  t\in(-\e,\e),\ t\ne 0.$$
	This is clearly well defined for a fixed $t>0$ since $\Ad_L$  acts freely on each regular orbit.
	Furthermore, each $G$ orbit intersects the geodesic in precisely two points, namely $\gamma(t)$ and $\sigma\gamma(t)=\gamma(-t)$. To see that $\phi$ is well defined on $M_{reg}$ we use  \eqref{normalizer}:
	$${
	\phi(\sigma\gamma(t))=\sigma n_t^{-1}\gamma(t), {\text{ \it and }} \phi(\gamma(-t)) =n_{-t}^{-1}\gamma(-t)=n_{-t}^{-1}\sigma\gamma(t) =\sigma n_t^{-1}\gamma(t) }.
	$$

	Finally,  $\phi$ can be extended to the singular orbit since $n_0=e$. Altogether this implies that $\phi\colon M\to M$ is well defined, continuous, and $G$-equivariant. It is smooth on $M_{reg}$ since $M_{reg}=I\times G/H$ for some open interval $I$ and $\phi(t,gH)=(t,gn_t^{-1}H)$ which is smooth since  $n_t$ is. To see that it is also smooth at the singular orbit we can argue as follows. Let $U(0)\subset \fm$ be a neighborhood of $0$, with $\bar U=\exp_G(U(0))$, such that $\bar U\to G/K,\ g\to gK$ is a diffeomorphism onto its image. Then, if $\bar V\subset V$ is also a small neighborhood of $0$ in the slice, the map $(g,v)\to gv$ is a diffeomorphism of $\bar U\times \bar V$ onto a small neighborhood of $\gamma(0)$ in $M$. In these coordinates,  $(e,te_0)=\gamma(t)$ and $\phi$ becomes
	$g\cdot\gamma(t)=g(e,te_0)=(g,te_0)\to(gn_t^{-1},te_0) $ and is hence smooth.
	
	We now change the metric $g$ to the new  metric $ \bar g=\phi^*(g)$ for which the curve  $\bar\gamma(t)= n^{-1}_t\gamma(t)$ is a  geodesic normal to all orbits. The metric endomorphism $P_t$ with respect to the geodesic $\bar\gamma$,  changes, according to \eqref{change}, into 
	\begin{equation}
	\bar	P_t=\Ad(n^{-1}_t)\,P_t\,\Ad(n_t)
	\end{equation}
	 and, since $\Ad(n_t)X_i=Y_i(t)$, we get
	$$\bar	P_t(X_1)=\Ad(n_t^{-1})P_t\Ad(n_t)X_1=\Ad(n_t^{-1})P_tY_1(t)=
	\Ad(n_t^{-1})\lambda_1(t)Y_1(t)=\lambda_1(t)X_1.$$
	Since $X_2$ and $X_3$ belong to a different eigenspace of $\bar P_t$, it follows that $Q(\bar{P_t}(\frak{p}_0),\frak{m}_0 )=0$.

	\smallskip 
	
	(b) and (c) The proof in these cases works similarly, in fact is simpler since we are staying away from the singular orbit. Choose $\e>0$ such the eigenvalues  on $(a-\e,b+\e)$ have constant multiplicity and $a-\e>0$. This implies that
	 the eigenvectors are smooth and we choose, for  $t\in (a-\e,b+\e)$,  an orthonormal basis of  eigenvectors of ${P_t}_{|\fp_0\oplus\fm_0}$, respectively ${P_t}_{|\fm_1}$.
	 Let $X_i=Y_i(0)$ considered as action fields along $\gamma$.
	 Choose a smooth curve $n_t\in N(H)/H$ such that 
	$\Ad(n_t)X_i=Y_i(t)$ for   $t\in(a-\e/2,b+\e/2)$ and extend the curve such that $n_t=e$ outside $ (a-\e,b+\e)$. This smooth curve $n_t$ defines a $G$-equivariant diffeomorphism $\phi(g\gamma(t))= gn_t^{-1}\gamma(t)$ and the rest of the proof is as before since $n_t=e$ on an interval around $t=0$.
	\end{proof}

Notice though that in general $\Ad_{N(H)}$ only takes $\Ad_H$-invariant subspaces into other $\Ad_H$-invariant subspaces but does not necessarily preserve them unless they are not equivalent to any other $\Ad_H$-invariant subspace.  Notice also that in part (b) and (c)  in fact no singular orbit is required.
	\smallskip
	
	\begin{rem}
	The proof also shows that one can change the normal geodesic via an equivariant diffeomorphism  into another curve transverse to all orbits.
	Indeed, let $\gamma_i(t),\ i=1,2$  be two smooth curves in $M^H$  transverse to all orbits  satisfying $\gamma_i(-t)=\sigma_i \gamma_i(t)$, where $\sigma_i$ is the Weyl group element at $\gamma_i(0)$. Then there exists a function $f$ with $f(t)=f(-t)$ and a smooth curve $n_t\in N(H)/H$ such that $\gamma_2(t)=n_t^{-1}\gamma_1(f(t))$ at the regular points. The map $\phi(g\gamma_1(t))= gn_t^{-1}\gamma_2(f(t))$ is an equivariant diffeomorphism taking $\gamma_1$ to $\gamma_2$. The condition \eqref{normalizer} becomes $\sigma_2 n_{-t}=n_t \sigma_1$ which follows from $\gamma_i(-t)=\sigma \gamma_i(t)$. The well definedness and smoothnesss of $\phi$ then follows as above. Of course if there are no non-regular orbits there are no conditions.   In particular, in order to describe the set of all $G$-invariant metrics one can, up to an equivariant isometry,  fix a curve and its parametrization, such that it is a geodesic normal to the orbits for all $G$-invariant metrics.
\end{rem}

	 \smallskip

\section{Four dimensional cohomogeneity one manifolds}

In this section we describe all simply connected 4-dimensional cohomogeneity one manifolds  $(M^4,G)$ with $G$  compact, and such that not all orbits are regular, see \cite{GZ,Pa}. Recall that this implies that there are no exceptional orbits. Thus there exists a singular isotropy group, which we denote by $K$.   The principal orbit $G/H$ is a compact  3 dimensional homogeneous space and hence one of $\Sph^3/\Z_k, \ \Sph^2\times\Sph^1$, $\RP^2\times\Sph^1$ or $T^3$.  In the first case, $G=SO(4),U(2),SU(2)$, or $SO(3)$. In the first three sub cases, a normal subgroup $SU(2)$ acts transitively and hence we can assume $G=SU(2)$ since the set of invariant metrics can only get enlarged. If $G=SO(3) $, we can make the action ineffective and again assume $G=SU(2)$. If $G/H= \Sph^2\times\Sph^1$ or $\RP^2\times\Sph^1$, then   $G= SO(3)SO(2)$. This case is special and will be easily dealt with at the end of Section 4. If $G/H=T^3$, then $G=T^3$ and the  principal isotropy group is trivial. Thus $K=S^1$, hence each disk bundle is homotopy equivalent to $T^3/S^1=T^2$, whereas the principal orbit is $T^3$. But then van Kampen implies that $M$ is not simply connected. Thus, except  in subsection 5.4.7, we will from now on  assume that $G=SU(2)$. Hence the principal orbit is $SU(2)/H$ with $H$ a finite group. Such finite groups are isomorphic to $\Z_k$, a binary dihedral group $D_k^*$ (of order $4k$), or the binary groups $T^*,O^*,I^*$. For the three latter groups the isotropy representation of $\Ad_H$ on $\fn$ is irreducible. This means that the only possibility for $K$ is $G$ itself, contradicting the assumption that $K/H$ is a sphere. If $H=\Z_k$, then $K=SO(2)$ or $\Pin(2)$, and if $H=D_k^*$, then $K=\Pin(2)$ are the only possibilities, apart from the special case where $G=K$ and $H=\{e\}$, i.e., where the action has a fixed point.

For the smoothness conditions of the metric near a codimension two singular orbit, i.e. $K_0=S^1$, one needs to consider two integers depending on the action of $K_0$ on $\fm$ and on the slice, see \cite{VZ}. In our case they are both two dimensional and $K_0$ acts with speed 2 on $\fm$ in all cases, and with speed $a=|H\cap K_0|$ on the slice. The integer $a$ depends on the group diagram and will be crucial in our discussion.

\smallskip
We  now describe the isotropy representation, the metrics, and the integer $a$. For convenience we identify $SU(2)$ with $Sp(1)$.

a)\ $K=SO(2)=\{e^{i\theta}\}$ and  $H=Z_k$. The group $H$ is generated by $\zeta=e^{\frac{2\pi i}{k}}$, and hence $\fn=\fn_0\oplus\fn_1\simeq\R\oplus\C$ and $\Ad_H$ acts on $\fn$ as $\Ad(\zeta)(r,z)=(r,\zeta^2 z)$.
If $k=2$, $H$ is effectively trivial and any metric on $\fn$ is allowed. If $k=4$, the action is  $(r,z)\to(r,- z)$ and hence $\fn_0$ is orthogonal to $\fn_1$ and, on the two dimensional module $\fn_1$, the metric can be arbitrary. In all other cases the metric is diagonal and depends on 2 functions.
 $K$ acts on $\fm\simeq \C$ as $v\to A^2v$ since $G/K$ is $\Z_2$ ineffective. Its action on the slice $V\simeq \C$ is given by $v\to A^kv$ since $K/H$ is $\Z_k$ ineffective. Hence $a=k$.
 
 (b) \ $K=\Pin(2)=\{e^{i\theta}\}\cup j\cdot \{e^{i\theta}\}$ and  $H=Z_k$.  In order for $K/H=S^1$, $H$ must contain an element in the second component of $K$, which we can assume is $j\in Sp(1)$. Hence necessarily $k=4$, and $H$ is generated  by $j$ . The action of $H$ on $\fn=\fn_0\oplus\fn_1\simeq\R\oplus\C$ is $(r,z)\to (r,-z)$  hence the metric on $\fn_1$ is arbitrary. Here $a=2$.
 
 (c)\   $K=\Pin(2)=\{e^{i\theta}\}\cup j\cdot \{e^{i\theta}\}$ and  $H=D_k^*, \ k>1$ generated by 
 $\zeta=e^{\frac{2\pi i}{2k}}$ and $j$. If $k=2$, $H$ is the quaternion group. The action of $\Ad_H$ on   $\fn=\fn_0\oplus\fn_1\simeq\R\oplus\C$  is given by $\Ad(\zeta)(r,z)=(r,\zeta^2 z)$ and  $j\cdot(r,z)=(r,b+ci)\to (-r,b-ci)$. The metric is diagonal and depends on $3$ functions. If $k>2$, it depends on only $2$ functions. Here $a=2k$.

\smallskip

(d) \ The last possibility is a codimension 4 singular orbit, i.e. $G=K=SU(2)$, hence $H=\{e\}$ and $a=1$.

\smallskip

A compact manifold is (since we assume not all orbits are regular) the union of two such disk bundles. But in our discussion it will be sufficient to solve the problem near one singular orbit.
\bigskip

We choose the following basis for the Lie algebra of $SU(2)$:
$$X_1=\begin{pmatrix} i &0\\0 &-i\end{pmatrix},\quad X_2=\begin{pmatrix} 0 &1\\-1 &0\end{pmatrix}\quad X_3=\begin{pmatrix} 0 &i\\i &0\end{pmatrix}$$
 with Lie brackets:
$$
[X_i,X_j]=2X_k, \text{ where } i,j,k \text{ is a cyclic permutation of } 1,2,3.
$$
We fix a bi-invariant metric $Q$ on $\fg$ such that $X_i$ is orthonormal. The metric with be determined by the inner products of these basis elements.

\bigskip
\begin{center}
	Known examples
\end{center}
\bigskip

We need to compare our solutions with the known solutions on simply connected homogeneous 4 dimensional manifolds with $G=SU(2)$, considered as a cohomogeneity one manifold. It turns out that these are all symmetric spaces.  For the compact examples the metric is defined on $0\le t\le t_0$ with stabilizer groups $K_-$ and $K_+$ at $t=0$ and $t=t_0$. In the case of non-compact manifolds there is only one singular isotropy group, which we denote by $K$.  An upper index $'$ indicates a different embedding. See \cite{Zi} for some of the details. Notice though that, since we choose $G=SU(2)$ instead of $G=SO(3)$ as in \cite{Zi}, the functions must be multiplied by 2. Since all metrics are diagonal, we list the norm of the elements $X_i$ of the basis, i.e. $f_i=v_i^2$. We separate the examples by the integer $a=|H\cap K_o|$ determined by the smoothness conditions.

\bigskip
\begin{center}
	Ineffective kernel $a=4$
\end{center}

\bigskip

{\bf Example 1} $M=\Sph^4 $ with $G=SU(2)$, $K_-=\Pin(2)$, $K_+=\Pin(2)'$ and $H=D_2^*$, the quaternion group, and metric
\begin{equation*}
v_1 = 4 \sin(t),\  v_2 = 2\sqrt{3} \cos(t) - 2\sin(t),\  v_3 = 2\sqrt{3} \cos(t) + 2\sin(t))
 \text{ with } 0\le t\le \pi/3
\end{equation*}

\bigskip

{\bf Example 2} $M=\CP^2 $ with $G=SU(2)$, $K_-=SO(2)'$, $K_+=\Pin(2)$ and $H=\Z_4$ and metric
\begin{equation*}
v_1 = 2\sin(2t),\ v_2 = \sqrt{2}(\cos(t)-\sin(t)),\  v_3 = \sqrt{2}(\cos(t)+\sin(t))  \text{ with } 0\le t\le \pi/4
\end{equation*}

\smallskip

{\bf Example 3} The Tsukada Example with  $G=SU(2)$, $K=\Pin(2)$ and $H=D_2^*$ and metric 
$$
v_1=2(e^t-e^{-t}),\quad
v_2=2e^t,\quad
v_3=2e^{-t}$$

\bigskip

\begin{center}
	Ineffective kernel $a=2$
\end{center}

\bigskip

{\bf Example 4} $M=\CP^2 $ with $G=SU(2)$, $K_-=\Pin(2)$, $K_+=SO(2)'$ and $H=\Z_4$ and metric
\begin{equation*}
v_1 = 2\sin(t),\quad v_2 = 2\cos(2t),\quad v_3 = 2\cos(t) \quad \text{ with } 0\le t\le \pi/4
\end{equation*}
\smallskip

{\bf Example 5} $M=\Sph^2\times\Sph^2 $ with $G=SU(2)$, $K_-=K_+=SO(2)$ and $H=\Z_2$ and metric
\begin{equation*}
v_1=2\sin( t),\quad  v_2=2\cos(t),\quad  v_3=2,\quad \text{ with } 0\le t\le \pi/2
\end{equation*}
We can also multiply all functions  by a constant $b$. This is an orbifold metric if $b$ is an integer different from $2$. But we remark that if $b<2$, there are some sectional curvatures which are negative.

\bigskip
\begin{center}
	Ineffective kernel $a=1$ (actions with fixed points)
\end{center}

\bigskip

{\bf Example 6} $M=\CP^2$ with $G=SU(2)$, $K_-=SU(2)$, $K_+=SO(2)$ and $H=\{e\}$ and metric
\begin{equation*}
v_1=v_2= \sin(t), \quad v_3= \frac 12 \sin(2t),\quad \text{ with } 0\le t\le \pi/2
\end{equation*}
\smallskip

\smallskip

{\bf Example 7} $M=\C\QH^2$ with $G=K=SU(2)$ and $H=\{e\}$ and metric
\begin{equation*}
v_1 =v_2= \sinh(t),\quad v_3 =\frac 12 \sinh(2t) \quad \text{ with } 0\le t <\infty
\end{equation*}
\smallskip

{\bf Example 8} $G=SU(2)$ and $H=\{e\}$ and metric
\begin{equation*}
v_1 =v_2= v_3=\sin(t),\ \sinh(t), \text{ or } t ,  \quad \text{ with } 0\le t\le \pi
\end{equation*}
\smallskip
corresponding to the standard metric on $\Sph^4$, $\QH^4$ and $\R^4$. In the first case $K_-=K_+=SU(2)$ and in the last two cases $K=SU(2)$.

\bigskip

\begin{center}
$G=SO(3)SO(2)$ with $a=1$ 
\end{center}

\bigskip

{\bf Example 9} $M=\Sph^4$  with $G=SO(3)SO(2)$, $K_-=T^2$, $K_+=SO(3)$,  and $H=S^1\times\{e\}$ and metric
\begin{equation*}
v_1=\sin(t),\quad v_2=v_3=\cos(t) \quad \text{ with } 0\le t\le \pi/2
\end{equation*}
We can also multiply $v_1$  by a constant $a$. This is an orbifold metric if $a$ is an integer with $a>1$.

\bigskip

{\bf Example 10}  $G=SO(3)SO(2)$ and $H=S^1\times\{e\}$ with metric
\begin{equation*}
v_1=\sin(t),\ \sinh(t), \text{ or } t ,\ \text{ and }  v_2=v_3=a 
\end{equation*}
corresponding to the product metric on  $M=\Sph^2\times\Sph^2$,  $M=\Sph^2\times\R^2$ and  $M=\Sph^2\times\QH^2$.  In the first case $K_-=K_+=T^2$ and in the last two cases $K=T^2$.  We can again multiply $v_1$ by a constant, and get an orbifold metric.
\smallskip

\smallskip

Example 2 and 4 are the same action and metric, but the two singular orbits are interchanged.  The functions look different though, which will be useful when solving the equations in Section 5. For example 1 and 2, see \cite{Zi} Section 2 for details.  Example 5 is the diagonal action of $SO(3)$ on $\Sph^2(a)\times\Sph^2(b)$. Example 9 is a sum action on $\R^3\oplus\R^2$ and in Example 10 $SO(3)$ acts transitively on the first factor.

\smallskip

In \cite{Ts} Tsukada uses $G=SO(3)$ instead of $SU(2)$ which means the functions in Example 3 need to be divided by $2$ when comparing.
The metric is clearly complete, and Tsukada proved that it is  not homogeneous by showing that $|\nabla \Ric|$ is not constant. The underlying non-compact manifold can be regarded as one of the homogeneous disc bundles in the Example 1 on $\Sph^4$.  All other examples are isometric to symmetric spaces.

This list of Examples is also a description of all cohomogeneity one actions on simply connected 4-manifolds with $G=SU(2)$ and not all orbits regular, as long as we add the action where  $G=SU(2)$,  $K_-=K_+=SO(2),\ H=Z_n$ and $a=1$. This manifold is $\CP^2 \# -\CP^2$ when $n$ is even, and $\Sph^2\times\Sph^2$  when $n$ is odd.  But, according to our result, it does not admit a curvature homogeneous metric.

\section{The form of the metric}
In this section we discuss the metric and the smoothness conditions. The metric is completely describe by the restriction to the space tangent to the regular orbits along the normal geodesic, and has the form

$$f_i(t)=g_t(X_i^*,X_i^*)_{\gamma(t)},\qquad d_{ij}(t)=g_t(X_i^*,X_j^*)_{\gamma(t)}.$$
for $i,j=1,2,3$ and $i\neq j$. We identify $g_t$ with the matrix
$$P_t=\begin{pmatrix}f_1 &d_{12} &d_{13}\\d_{12} &f_2 &d_{23}\\d_{13} &d_{23} &f_3
\end{pmatrix}.$$

If the singular orbit has codimension two, we have:
$$\fg=\fp_0+\fm$$
where $\dim\fp_0=1$, $\dim\fm=2$ and possibly  $\fm=\fm_0$.

For the smoothness conditions two integers $d=2$ and $a$ (see the notaion in \cite{VZ}) enter. According to \cite{VZ} the metric $P_t$ is smooth at $t=0$ if and only if:
\begin{eqnarray}\label{smooth}
f_1=a^2\,t^2+t^4 \phi_1(t^2),\quad f_2+f_3=\phi_2(t^2),\quad f_2-f_3=t^{\frac 4a}\,\phi_3(t^2)\\
d_{12}=t^{2+\frac 2a}\,\phi_4(t^2),\quad d_{13}=t^{2+\frac 2a}\,\phi_5(t^2),\qquad d_{23}=t^{\frac 4a}\,\phi_6(t^2)\nonumber
\end{eqnarray}
for some smooth functions $\phi_i$.

It follows that the metric is diagonal and $f_2=f_3$ unless $a=1,2,4$ and $d_{12}=d_{13}=0$ unless $a=1,2$. For $a\leq 4$ we have the following possibilities for $K$ and $H$ according to Section 3:

\begin{enumerate}
	\item If $a=1$, then $K=SO(2)$ and $H=\{e\}$.
	\item If $a=2$, then $K=\Pin(2)$ and $H=\Z_4$, or $K=SO(2)$ and $H=\Z_2$.
	\item If $a=4$, then $K=\Pin(2)$ and $H=D_2^*$,  or $K=SO(2)$ and $H=\Z_4$.
\end{enumerate}

\smallskip

If the singular orbit has codimension $4$, smoothness according to \cite{VZ} means that
$$
f_{i}=t^2 +t^4\phi_{i}(t^2),\qquad d_{ij}=t^4\phi_{ij}(t^2)
$$
for some smooth functions $\phi_i,\phi_{ij}$.

The last case, where the action has no singular orbits, no smoothness conditions are required, apart from the functions $f_i, d_{ij}$ being smooth for all $t$. 

We finally remark that if the manifold is non-compact, the metric is complete if it is defined for all $t\in \R$.

\section{Diagonal metrics}

In this Section we consider smooth diagonal metrics, i.e., metrics where $d_{ij}=0$, on simply connected cohomogeneity one manifolds with at least one singular orbit. The case where the action has a fixed point  is easy and left to the end of the section and the case where all orbits are regular will be dealt with separately in Section 6. So we assume that $G=SU(2)$ and  $K_0=SO(2)$. Here it will be simpler to express the functions as length $f_i=v_i^2$. Then
the vector fields $E_i=\frac 1{v_i}\,X_i$, for $i=1,2,3$, and $E_4=\gamma'(t)$ form an orthonormal basis of $T_{\gamma(t)}M$ for all $t\ge 0$. In this case the only non-zero components of the curvature tensor are (see \cite{GVZ}):
\begin{eqnarray}\label{curvature}
	R(E_i,E_j,E_i,E_j)&=&\frac{2v_k^2(v_i^2 + v_j^2) - 3v_k^4 + (v_i^2 - v_j^2)^2}{v_i^2v_j^2v_k^2}
	-\frac{v_i'}{v_i}\frac{v_j'}{v_j} \nonumber\\ 
	R(E_i,E_j,E_k,E_4)&=&\sigma(-2\frac{ v_k'}{v_iv_j} +
	\frac{v_i'}{v_i}\frac{v_i^2 + v_k^2 - v_j^2}{v_iv_jv_k} + \frac{v_j'}{v_j}\frac{v_j^2 +
		v_k^2 - v_i^2}{v_iv_jv_k})\\
	R(E_i,E_4,E_i,E_4)&=&-\frac{v_i''}{v_i} \nonumber
\end{eqnarray}
where $\sigma$ is the sign of the permutation $(i,j,k)$.

In order to apply \eqref{diffeq}, we make the following general observation:
\begin{lem}\label{parallel}
	Let $M$ be a Riemannian cohomogeneity one manifold, and $X_i^*$ Killing vectorfields which form a basis of $(\gamma')^\perp$ along a geodesic $\gamma$ normal to all orbits. If $X_i^*$ are orthogonal to each other along $\gamma$, then the normalized vector fields $\bar X_i=X_i^*/|X_i^*|$ are parallel along $\gamma$.
\end{lem}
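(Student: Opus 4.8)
The plan is to compute the covariant derivative $\nabla_{\gamma'}\bar X_i$ directly in the orthonormal frame $\{\bar X_1,\bar X_2,\dots,\gamma'\}$ along $\gamma$, where $\bar X_i = X_i^*/v_i$ and $v_i=|X_i^*|$. Since the $X_i^*$ span $(\gamma')^\perp$ and are mutually orthogonal along $\gamma$, this frame is indeed orthonormal. Writing $\nabla_{\gamma'}X_i^* = \sum_j\langle\nabla_{\gamma'}X_i^*,\bar X_j\rangle\,\bar X_j + \langle\nabla_{\gamma'}X_i^*,\gamma'\rangle\,\gamma'$, I would determine each coefficient from three inputs: the Killing equation $\langle\nabla_Y X_i^*,Z\rangle = -\langle\nabla_Z X_i^*,Y\rangle$, metric compatibility of $\nabla$, and the fact that $\gamma$ is a geodesic.

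The component along $\gamma'$ vanishes immediately, since the Killing equation with $Y=Z=\gamma'$ gives $\langle\nabla_{\gamma'}X_i^*,\gamma'\rangle = 0$. The diagonal component follows from metric compatibility: $\langle\nabla_{\gamma'}X_i^*,X_i^*\rangle = \tfrac12\gamma'\langle X_i^*,X_i^*\rangle = v_iv_i'$, so that $\langle\nabla_{\gamma'}X_i^*,\bar X_i\rangle = v_i'$.

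The crux is the off-diagonal terms $A_{ij}:=\langle\nabla_{\gamma'}X_i^*,X_j^*\rangle$ for $i\neq j$, which I claim vanish because they are simultaneously antisymmetric and symmetric in $i,j$. Antisymmetry follows by differentiating the relation $\langle X_i^*,X_j^*\rangle\equiv 0$ along $\gamma$: metric compatibility yields $A_{ij}+A_{ji}=\gamma'\langle X_i^*,X_j^*\rangle = 0$. For symmetry I would use the Killing equation to rewrite $A_{ij} = -\langle\nabla_{X_j^*}X_i^*,\gamma'\rangle$ and $A_{ji} = -\langle\nabla_{X_i^*}X_j^*,\gamma'\rangle$, whence $A_{ij}-A_{ji} = -\langle\nabla_{X_j^*}X_i^*-\nabla_{X_i^*}X_j^*,\gamma'\rangle = \langle[X_i^*,X_j^*],\gamma'\rangle$ by torsion-freeness. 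Here the essential geometric fact enters: $[X_i^*,X_j^*]$ is again an action field of $G$, hence tangent to the orbits and therefore orthogonal to $\gamma'$; thus $A_{ij}=A_{ji}$. Combined with antisymmetry this forces $A_{ij}=0$.

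Putting the pieces together gives $\nabla_{\gamma'}X_i^* = v_i'\,\bar X_i = (v_i'/v_i)\,X_i^*$, and hence $\nabla_{\gamma'}\bar X_i = \tfrac1{v_i}\nabla_{\gamma'}X_i^* - \tfrac{v_i'}{v_i^2}X_i^* = 0$, as desired. The only delicate step is the vanishing of the off-diagonal terms $A_{ij}$, and within it the single nontrivial observation is that brackets of action fields remain tangent to the orbits; everything else is bookkeeping with the Killing equation and metric compatibility.
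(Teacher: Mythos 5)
Your proof is correct and uses essentially the same ingredients as the paper's: differentiating the orthogonality relation $\langle X_i^*,X_j^*\rangle=0$, converting via the Killing equation, and invoking $\langle [X_i^*,X_j^*],\gamma'\rangle=0$ to kill the off-diagonal terms. The paper packages these into a single chain of equalities rather than your symmetric-plus-antisymmetric bookkeeping, but the argument is the same.
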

	\begin{proof}
	Since $\ml  X_i^*, X_j^*\mr=0$ for $i\ne j$, it follows, using $\ml[X_i^*,X_j^*],\gamma'\mr=0$, that 
	\begin{eqnarray*}
	0=\ml  X_i^*, X_j^*\mr'=\ml \nabla_{\gamma'}  X_i^*, X_j^*\mr
	+\ml   X_i^*, \nabla_{\gamma'} X_j^*\mr=-\ml \nabla_{X_j^*}  X_i^*, \gamma'\mr
	-\ml  \nabla_{X_i^*}X_j^*,\gamma' \mr \\
	=-2	\ml \nabla_{X_j^*}  X_i^*, \gamma'\mr=2	\ml \nabla_{ \gamma'}  X_i^*,X_j^*\mr=2 |X_i^*|	\ml \nabla_{ \gamma'} \bar X_i,X_j^*\mr.
\end{eqnarray*}

Furthermore, $\ml \nabla_{ \gamma'} \bar X_i,\bar X_i\mr =\frac12  \ml  \bar X_i,\bar X_i\mr'=0 $, and  $\ml \nabla_{ \gamma'} \bar X_i,\gamma'\mr=\frac{1}{|X_i^*|}\ml \nabla_{ \gamma'}  X_i^*,\gamma'\mr =0$. Altogether, we have $\nabla_{ \gamma'} \bar X_i=0$.
	\end{proof}
Since action fields are Killing vector fields along a geodesic, it follows that $E_i$ are parallel along $\gamma$.

From \eqref{curvature} it follows that if $R(E_i,E_j,E_k,E_l)\ne 0$, then  $R(E_m,E_j,E_k,E_l)\ne 0$  if and only if $m = i$. Together with the skew symmetry of $A$, it follows that the right hand side in  \eqref{diffeq} is $0$.
Hence, if $M$ is curvature homogeneous,  all components of the curvature tensor  are constant (for a diagonal metric). We will use this property and take a  power series at the singular orbit to classify the metrics. Recall also that we can assume that $a=1,2$, or $a=4$.
\smallskip

We start with the case where the singular orbit has codimension $2$.

\smallskip

At the singular orbit $v_1(t)$ satisfies $v_1(0)=0$. The equation   $R(E_1,E_4,E_1,E_4)=-\frac{v_1''}{v_1}=c$ can be integrated explicitly, and we can normalize the metric so that $c=1,-1$ or $0$. Thus $v_1$ must be one of $a\sin(t),\ a\sinh(t)$ or $at$
for some constant  $a$, determined by the ineffective kernel of the action.  We can also solve the equations $R(E_i,E_4,E_i,E_4)=-\frac{v_i''}{v_i}=c_i,\ i=2,3$ for some constants $c_i$. Notice that when $v_1$ is a trigonometric  function the manifold is compact. Since $v_1$ vanishes at $k\pi, k\in\Z$, the stabilizer group is equal to $SO(2)$ at these points and hence  $v_2$ and $v_3$ must agree at $k\pi$ since the metric is invariant under $SO(2)$ at these points.    Thus $v_2$ and $v_3$ are trigonometric functions as well. Similarly, if $v_1$ is hyperbolic or linear, $M$ is non-compact and hence  $v_2$ and $v_3$ are both hyperbolic or linear.

 Hence the metric is determined by a few constants. We can plug the functions $v_i$ in the expressions for the components of the curvature tensor, that have to be constant. By computing the first non-zero derivatives  of the curvatures at $t=0$ we derive some algebraic conditions on these constants which can be solved explicitly.  
 
 \smallskip
 
  It is convenient to first deal with the special case of $v_2(t)=v_3(t)$.

\smallskip

\subsection{$v_2(t)=v_3(t)$}
Notice that smoothness implies that $v_2$ must be an even function. We also have that $R(E_2,E_4,E_2,E_4)=R(E_3,E_4,E_3,E_4)=c$.  
If  $M$ is compact, the smoothness conditions imply that 
$$
v_1=a\sin(t),\quad  v_2=b\cos(ct)\quad v_3=b\cos(ct)$$
with $a,b>0 ,c\ge 0$. 
Computing the first non-zero derivatives of $R(E_1,E_2,E_1,E_2)$ and $R(E_2,E_3,E_2,E_3)$ at $t=0$ gives us two equations:
$$
b^4c^4-4b^2c^2+3a^2=0,\quad b^4c^4-b^4c^2+3a^2=0
$$
which implies $b^2c^2(b^2-4)=0$. Thus $b=2$, and hence $16c^4-16c^2+3a^2=0$.  Using the fact that only $a=1,2,4$ are allowed, it follows  that $a=1$, and  $c=\frac12$.
Thus the only solution is 
$$
v_1=\sin(t),\quad  v_2=v_3=2\cos(\frac12 t)
$$
which is  Example 6 on $\CP^2$. Notice though that we have to switch the two sides and reparametrize the geodesic. 

If $M$ is non-compact, we have one possibility:
$$
v_1=a\sinh(t),\quad  v_2=b\cosh(ct)\quad v_3=b\cosh(ct)$$
with $a,b>0,$ $c\ge 0$.  This gives rise to the equations: 
$b^4c^4+4b^2c^2+3a^2=0,\
b^4c^4-b^4c^2+3a^2=0
$,  thus $b^cc^2(b^2+4)=0$,
which forces $a=0$ and we have no solutions.

The last possibility is that
$$
v_1=a t,\quad  v_2=b\cosh(ct)\quad v_3=b\cosh(ct)$$
with $a,b>0,$ $c\ge 0$.  This gives rise to the equations:
$b^4c^4+4b^2c^2+3a^2=0,\
b^4c^4+3a^2=0
$
 which  has only $0$ solutions.
 
 \bigskip

From now on we may assume that $v_2\neq v_3$. The smoothness conditions at the singular orbit tells us that
\begin{equation*}
\label{smth2}
v_2^2+v_3^2=\phi_1(t^2),\qquad v_2^2-v_3^2=t^{\frac 4a} \phi_2(t^2)
\end{equation*}
for some smooth functions $\phi_1,\phi_2$. If $\frac 4a$ is not an integer this implies $v_2=v_3$, hence we may assume that $a=1,2,4$. But for $a=1$ we have that $v_2$ and $v_3$ must agree at $t=0$ up to order $3$. Since both functions satisfy the ODE $R(E_i,E_4,E_i,E_4)=c_i$, it again follows that $v_2=v_3$. Hence we are just left with the cases $a=2$ and $a=4$. We consider the cases $c=1,0,-1$ separately. Notice also that the smoothness conditions imply that if $v_2$ is hyperbolic and  $v_3$ linear, then $v_3$ must be constant.

\smallskip

\subsection{$c=1$}
Now $v_1=a\sin(t)$ with $a=2$ or $a=4$.
\subsubsection{a=2}
The smoothness conditions imply that $v_2$ and $v_3$ are even with $v_2(0)=v_3(0)\neq 0$ and hence
$$v_1=2 \sin(t),\quad v_2=b \cos(c_1 t), \quad v_3=b \cos(c_2 t)
$$
for some constants $b>0, c_i\ge 0$. From the first non-zero derivatives of $	R(E_1,E_2,E_1,E_2),$ $ R(E_1,E_3,E_1,E_3),$ $ R(E_2,E_3,E_2,E_3)$
we get the equations:
$$
A=b^4c_1^4-6b^4c_1^2c_2^2+b^4c_2^4+8b^2c_1^2+8b^2c_2^2-48=0
$$
$$
B=b^4c_1^4-6b^4c_1^2c2^2+b^4c_2^4-
4b^4c_1^2+8b^4c_2^2+24b^2c_1^2-24b^2c_2^2-48=0
$$
$$
C=b^4c1^4-6b^4c_1^2c_2^2+b^4c_2^4+8b^4c_1^2-4b^4c_2^2-
24b^2c_1^2+24b^2c_2^2-48=0
$$

Hence $\frac43 B-\frac13 C-A=8b^2(c_1^2-\frac32 c_2^2)(4-b^2)=0,\ B-C=12b^2(c_1^2- c_2^2)(4-b^2)=0$, and thus $b=2$ (in which case the 3 equations are identical). Using in addition the derivatives of the remaining components of the curvature tensor one obtains:
$$
(c_1^2-c_2^2)^2-2(c_1^2+c_2^2)+1=0,\quad \text{and}\quad c_1^2+c_2^2-c_1^2c_2^2-1=0
$$
 whose solutions are, up to permutation, $(c_1,c_2)=(1,2),(1,0)$.
The two solutions are  Example 4 on $\CP^2$ and Example 5 on $\Sph^2\times\Sph^2$.

\smallskip

\subsubsection{a=4}
As in the previous case $v_2$ and $v_3$ must be trigonometric functions. From \eqref{smth2} it follows that $v_2+v_3$ is even while $v_2-v_3$ is odd:
$$
v_1=4 \sin(t),\quad
v_2=b_1\cos(c t) + b_2 \sin(c t),\quad
v_3=b_1 \cos(c t) - b_2 \sin(c t)
$$
with $b_1,b_2,c>0$.

If we impose the vanishing of the first few derivatives of the three sectional curvatures at $t=0$,  we obtain the following equations:
$$
c^4(3b_1^4-2b_1^2b_2^2-9b_2^4)-c^2(b_1^2b_2^2+12b_1^2+36b_2^2)+144=0
$$
$$
c^4(b_1^4+2b_1^2b_2^2-3b_2^4) - c^2(b_1^4+2b_1^2b_2^2)+48=0,\qquad c^2(b_1^2-3b_2^2)-b_1^2+12=0 
$$

The last equation implies that that there are two possibilities: $b_1^2=12$ and $b_2^2=4$, or $c^2=(b_1^2-12)/(b_1^2-3b_2^2)$. In the former case the first two equations reduce to $4c^4-7c^2+3=0,\ 4c^4-5c^2+1=0$ and hence $c=1$. In the latter case, substituting $c^2$ into the second  equation gives $3b_1^4(b_2^2 - 4)(b_2^2 + b_1^2 - 16)=0$. If $b_2^2=4$ then  the first equation becomes $b_1^4 - 8b_1^2 - 48=0$ and hence $b_1^2=12$ and thus $c=1$ again. If $b_2^2=16-b_1^2$, then the first equation gives you $b_1^2=8$. Thus $b_2^2=8$ and hence  $c=1/2$.

Thus the only  solutions are:
$$
(b_1,b_2,c)=(2\sqrt{3},2,1) \text{ and } (b_1,b_2,c)=( 2\sqrt{2}, 2\sqrt{2},\frac12)
$$
and we obtain  Example 1 on $\Sph^4$, 
and Example 2 on $\CP^2$ (up to parametrization).

\smallskip

\subsection{$c=0$}
In this case $v_1=at$ with $a=2$ or $a=4$.
\subsubsection{a=2}
As in the previous case, the functions have the form
$$
v_1=2t,\quad v_2=b \cosh(c_1 t), \quad v_3=b \cosh(c_2 t)
$$
with  $b>0$, $c_i\ge 0$ and $c_1\ne c_2$ which satisfy the equations
$$
b^4c_1^4-6b^4c_1^2c_2^2+b^4c_2^4-8b^2c_1^2-8b^2c_2^2-48=0 
$$
$$
 b^4c_1^4-6b^4c_1^2c_2^2+b^4c_2^4-24b^2c_1^2+24b^2c_2^2-48=0 
  $$
 $$ 
  b^4c_1^4-6b^4c_1^2c_2^2+b^4c_2^4+24b^2c_1^2-24b^2c_2^2-48=0
$$
This implies $b^2(2c_1^2-c_2^2)=0$ and $b^2(c_1^2-c_2^2)=0$ and hence there are no solutions.

\smallskip

\subsubsection{a=4}
$$
v_1= 4t,\quad
v_2=b_1\cosh(c t) + b_2 \sinh(c t),\quad
v_3=b_1 \cosh(c t) - b_2 \sinh(c t)
$$
with $b_1,b_2>0, c\ge 0.$ Here we get

$$
c^4(b_1^4-2b_1^2b_2^2-3b_2^4)+48=0  , \quad c^2(b_1^2 + 3b_2^2) - 12=0
$$ 
Solving the second equation for $c^4$ and substituting into the first implies $b_1=0$, contradicting $v_2(0)\ne 0$.

\smallskip

\subsection{$c=-1$}
In this case $v_1=a\sinh(t)$ with $a=2$ or $a=4$.
\subsubsection{a=2}
The functions must have the form 
$$
v_1=2\sinh(t),\quad v_2=b\cosh(c_1t),\quad v_3=b\cosh(c_2t)
$$
with $b>0$, $c_1,c_2\ge 0$ and $c_1\ne c_2$. The  vanishing of the derivatives of the curvature gives:
$$
 b^4c_1^4-6b^4c_1^2c_2^2+b^4c_2^4-8b^2c_1^2-8b^2c_2^2-48=0
 $$ 
$$
 b^4c_1^4-6b^4c_1^2c_2^2+b^4c_2^4-4b^4c_1^2+8b^4c_2^2-  24b^2c_1^2+24b^2c_2^2-48=0
 $$ 
$$
b^4c_1^4-6b^4c_1^2c_2^2+b^4c_2^4+8b^4c_1^2-  4b^4c_2^2+24b^2c_1^2-24b^2c_2^2-48=0
$$
which easily implies that
$$
b^2(c_1^2-c_2^2)(b^2+4)=0, \quad b^2(c_2^2-2c_1^2)(b^2+4)=0,
$$
which has no non-zero solutions.

\smallskip

\subsubsection{a=4} Here the functions are
$$
v_1=4 \sinh(t),\quad
v_2=b_1\cosh(c t) + b_2 \sinh(c t),\quad
v_3=b_1 \cosh(c t) - b_2 \sinh(c t)
$$
with $b_1,b_2,c>0$. These functions need to satisfy the equations
$$ c^4(3b_1^4+2b_1^2b_2^2-9b_2^4)+c^2(b_1^2b_2^2+12b_1^2-   36b_2^2)+144=0
$$
$$ c^2(b_1^2+3b_2^2)-b_1^2-12=0,\quad  c^4(8b_1^2b_2^2 - 12b_2^4) + c^2(b_1^2b_2^2 - 48b_1^2 + 48b_2^2) + 12b_1^2=0
$$

Thus $c^2=(b_1^2+12)/(b_1^2+3b_2^2)$ and substituting into the third equation gives you
$$\frac{9 b_1^4 (b_2^2-4) (-b_2^2+b_1^2+16)}{(b_1^2+3 b_2^2)^2}=0.$$
If $b_2=2$, then $c=1$ and the first equation becomes $b_1^4 + 8b_1^2 - 48$ which implies $b_1^2=4$. If $b_1^2=b_2^2-16$, then the first equation gives you $b_2^2=8$ contradiciting $b_1^2>0$.

Thus we have only one solution, $(b_1,b_2,c)=(2,2,1)$,  and hence the metric is
$$
v_1=4 \sinh(t)=2(e^t-e^{-t}),\quad
v_2=2\cosh( t) + 2 \sinh( t)=2e^t,\quad
v_3=2 \cosh( t) - 2 \sinh( t)=2e^{-t}
$$
which is the Tsukada's Example 3. 

\bigskip

\subsubsection{Codimension 4}

We will finally discuss  the case of a codimension 4 singular orbit with the metric still being diagonal.
Here smoothness implies that $v_i(0)=0$ and $v_i'(0)=1$, $\ i=1,2,3$.
Thus each function is one of 
$$\frac{1}{b}\sin(bt), \quad \frac{1}{b}\sinh(bt), \quad t.$$
This gives rise to the following cases.
\subsubsection{Case 1}
If the manifold is compact, we have the following possibility:
$$v_i=\frac{1}{b_i}\sin(b_it), \quad i=1,2,3$$
which gives rise to the equations
$$
5(b_1^2 - b_2^2)b_3^2 - b_1^4 + b_2^4=0,\quad 5(b_3^2 - b_1^2)b_2^2 + b_1^4 - b_3^4=0
$$
One easily sees that the only solutions are
  $(b_1,b_2,b_3)=(b,b,b)$ and $(b_1,b_2,b_3)=(2b,b,b)$ giving rise to Examples 6 and 8, up to parametrization.

\subsubsection{Case 2}
In the second case, we have
$$v_i=\frac{1}{b_i}\sinh(b_it), \quad i=1,2,3$$
The equations and solutions are the same as in the previous case, giving rise to  Examples 7 and  8.

\smallskip

\subsubsection{Case 3}
Combinations of hyperbolic and linear functions have no solutions. For example
$$v_i=\frac{1}{\sqrt{b_i}}\sinh(\sqrt{b_i}t),\quad i=1,2, \quad v_3=t$$
gives rise to the equations
$$
b_1^2  - 5 b_2^2=0 ,\quad                          
5b_1^2  -b_2^2=0,\quad 3b_2^4 - 3b_1^4=0 
$$
with only $0$ solutions.
\smallskip

The last case is $v_i=t$ which is of course flat euclidean space, again  one of the cases in Example 8.

\bigskip

\subsubsection{G=SO(3)SO(2)}
Here we can assume  that  $H_0=SO(2)\times\{e\}$. Indeed, if we choose a diagonal embedding of $H_0$, it would follow that the subaction by $SO(3)$ is also cohomogeneity one, but this is a special case of the previous discussions. Let $X_1,X_2\in\fn\subset\fso(3)\subset\fg$ be generators such that $Q$ induces the metric of curvature $1$ on $SO(3)/SO(2)=\Sph^2$ or $SO(3)/O(2)=\RP^2$. Finally, let $T$ be a generator of the Lie algebra of $SO(2)$. The decomposition of $\fn$ under the isotropy representation of $H$ is $\fn=\fn_0\oplus\fn_1$. $H_0$ acts trivially on $\fn_0$, and effectively on $\fn_1$ and hence $a=1$. The metric depends only on two functions, $f(t)=|X_1^*|_{|\gamma(t)}=|X_2^*|_{|\gamma(t)}$ and $g(t)=|T^*|_{|\gamma(t)}$. The principal orbit is $\Sph^2\times \Sph^1$, and $f(t)$ represents the radius of $\Sph^2$, and $g(t)$  the radius of $\Sph^1$.  The curvature operator of the metric on $M^4$ is diagonal and hence determined by  the sectional curvatures of the 2-planes spanned by  basis vectors. As before, \eqref{diffeq} implies that the metric is curvature homogeneous if and only if these curvatures are constant. Since
$$
\sec(X_i,\gamma')=-\frac{f''}{f},\quad \sec(T,\gamma')=-\frac{g''}{g}
$$
the functions $f,g$ are again trigonometric, hyperbolic or linear. From the Gauss equations it follows that
$$
\sec(X_1,X_2)=\frac{1-(f')^2}{f^2},\quad 
\sec(X_i,T)=-\frac{f'g'}{fg}
$$
The first equation, since the value must be constant, implies  that $f$ is either a trigonometric function, or $f$ is constant.
In the first case, we can shift the geodesic so that $f=\sin(t)$ and hence  the singular orbit has codimension three. The second equation then implies that $g=a\cos(t)$ for some $a>0$. This means that a second singular orbit occurs at $t=\pi/2$, and smoothness at that singular orbit implies $a=1$. This is Example 9, with the geodesic reversed.  

If $f$ is constant, then  $g$ must vanish at a singular orbit of codimension two, and we can assume, by starting at that singular orbit, that $g=\sin t$, $g=\sinh t$ or $g=t$. This is Example 10.

Notice though that 
$$
v_1=v_2=\sin(t),\quad v_3= a\cos(t)
$$
or, if we reverse the geodesic,
$$
v_1= a\sin(t),\quad v_2=v_3= \cos(t)
$$
is also a  curvature homogeneous solution. It is only smooth if $a=1$, but if $a$ is an integer it is an orbifold solution on $\Sph^4$ with an orbifold angle of $2\pi/n,\ n\in \Z^+$ at the codimension two singular orbit. Similary, in the second example we can multiply $g$ by a constants and get an orbifold metric on $\Sph^2\times\Sph^2$.

\bigskip

\section{The general case.}

Recall that if the metric is not necessarily diagonal, then we have two cases. Either $H$ acts trivially on $\fn=\fh^\perp$ and $N(H)=G=SU(2)$, or $\fn=\R\oplus\C$ with $H$ acting trivially on $\R$ and as $-\Id$ on $\C$ and hence $N(H)/H=S^1$. In the first case the metric is arbitrary on $\fn$, and in the second case $\R$ and $\C$ are orthogonal, but the metric on $\C$ is arbitrary. In either case, there exists an interval $[a,b]$ on the regular part of $M$ such that the multiplicities of $P_t$ are constant.   According to \pref{normalize},  the metric is thus equivariantly isometric to one where it  is diagonal in $I=(a,b)$. We will assume from now on that this is the case. Let $P_t=(v_1,v_2,v_3)$ be this metric with $v_i=|X_i|$. The curvature tensor is again given by the formulas in \eqref{curvature}. Thus it follows that if the metric is curvature homogeneous in the interval $I$, then the three functions $v_i$ are of the form
$$
v_i= a_i \sin(d_i t)+b_i \cos(d_i t), \quad v_i=a_i\, e^{d_i t}+b_i\, e^{-d_i t}, \quad  v_i=a_i\, t+b_i, \quad t\in I
$$
These functions  are analytic on all of $\R$ and the curvature functions, according to \eqref{curvature}, are analytic as long as $v_i>0$.

  Let $J$ be a connected interval such that $I\subset J$, and such that none of the functions $v_i$ vanish on $J$. Then  the functions $v_i$ define an invariant analytic metric on a cohomogeneity one manifold $M'=G/H\times J$, where $H$ is the  isotropy group at the regular points of $\gamma(t)$.
 Although this may not be the original metric on  $J\backslash I$, it is still curvature homogeneous. Indeed, the components of the curvature  are analytic functions, and  since they are constant on $I$, they must be constant on $J$ as well.   We choose $J$ to be maximal with this property and we then have two possible cases:

\begin{enumerate}
	\item[(a)]  $J\subsetneq \R$. This implies that at least one of the functions $v_i$ has a zero at a boundary point $t_0$ of $J$. We assume, for simplicity, $t_0=0$ and $J=(0,a)$. Moreover one of the functions, say $v_1$, takes on (up to scaling) one of the following simplified forms:
	$$v_1=a_1\,\sin(t),\qquad v_1=a_1\,(e^t-e^{-t}),\qquad  v_1=a_1\, t.$$
	The component of the curvature tensor on $M'$ must be constant in $J$. The vanishing of the derivatives of the curvature tensor  will enable us to determine the functions $v_i$.
	 Notice though that we are not allowed to use the smoothness conditions as we did in Section 5. 
	 \bigskip
	 
	\item[(b)] $J=\R$. In this case the functions $v_i$ fall into one of the following types
	$$v_i=a_i\, e^{d_i t}+b_i\, e^{-d_i t},\quad v_i=a_i\, e^{2 d_i t},\qquad v_i=a_i$$
	where $a_i$ and $b_i$ may be assumed to be positive, and $d_i>0$.  In this case it will be sufficient for us to use the fact that the sectional curvature $	R(E_2,E_3,E_2,E_3)$  is equal to a constant $k$. Thus  the numerator of $R(E_2,E_3,E_2,E_3)-k$ must be 0. It will turn out that these metrics will never be curvature homogeneous.
  \end{enumerate}

\bigskip
The result is that the functions $v_i$ in the interval $J$ and hence in $I$, agree with one of the known examples in Section 3, with one exception, but this exception does not extend to a  smooth metric on $M$.
The calculations though are significantly more involved than in Section 5. We illustrate the process with one example for each case.

	\bigskip

	For case (a) let
	$$v_1=a_1\, t,\qquad v_2=a_2 \sin(d_2 t)+b_2 \cos(d_2 t)\,\qquad v_3=a_3\, e^{d_3 t}+b_3\, e^{-d_3 t},$$
	with $a_1,d_2,d_3\neq 0$. The leading term in the power series of $R(E_2,E_3,E_2,E_3)$ is
	$$\frac{(a_3 + b_2 + b_3)^2(a_3 - b_2 + b_3)^2}{((a_3 + b_3)^2a1^2b2^2}t^{-2}.$$
	We assume first that $b_2\ne 0$, which implies that  $b_2=\pm (a_3+b_3)$. Let $b_2= (a_3+b_3)$, the other case being similar. Substituting into $R(E_1,E_3,E_1,E_3)$ and $R(E_1,E_2,E_1,E_2)$, their leading term becomes
	$$
	\frac{(a_1^2 - 8)(a_3 - b_3)d_3 + 8a_2d_2}{a_1^2(a_3 + b_3)}t^{-1}, \qquad
	\frac{  (a_1^2 - 8)a_2d_2 + 8d_3(a_3 - b_3)}{a_1^2(a_3 + b_3)}t^{-1}
	$$
	which easily implies that either $ a_2=0$ and $ a_3=b_3$, or $a_1=4$ and $(a_3-b_3)d_3+a_2d_2=0$.
	
	In the first case,
	substituting into $R(E_3,E_1,E_2,E_4)$ and $R(E_1,E_2,E_3,E_4)$, the first non-constant term is:
	$$
	\frac{-4(d_2^2 + d_3^2)^2a_3^2 + a_1^2(d_2^2 - 3d_3^2)}{8a_3^2a_1}\ t^2,
	\qquad   \frac{-4(d_2^2 + d_3^2)^2a_3^2 + a_1^2(3d_2^2 - d_3^2)}{8a_3^2a_1}\ t^2
	$$
	which implies that $d_2=d_3=0$, which is not allowed.
	
	\smallskip
	
	In the second case, $a_1=4$ and $(a_3-b_3)d_3+a_2d_2=0$, we solve for $a_2$ and substitute into $R(E_2,E_3,E_1,E_4)$, the first non-constant term is:
	$$
	\frac{3(d_2^2 + d_3^2)(a_3 - b_3)d_3}{2a_3 + 2b_3} \, t
	$$
	which implies $b_3=a_3$ and we are back in the first case.
	\smallskip
	
	If $b_2=0$, and hence $a_2\ne 0$, then $R(E_2,E_3,E_2,E_3)$ is constant only if  $b_3=-a_3$. Substituting into $R(E_2,E_3,E_1,E_4)$ and $R(E_3,E_1,E_2,E_4)$, the first non-constant term is:
	$$
	\frac{-2a2^2d_2^2 + 4a_3^2d_3^2 + 2a_1^2}{d_2^2a_1^2a_2^2}t^{-2},\qquad
	\frac{2a2^2d_2^2 + 4a_3^2d_3^2 - 2a_1^2}{d_2^2a_1^2a_2^2}t^{-2}
	$$
	and hence $a_3d_3=0$. Thus there are no solutions in this case as well.
	
	The case where all 3 functions are trigonometric is more complicated and is left to the reader.
	
	\vspace{10pt}
	
	 For case (b), let
	$$v_1=a_1\, e^{d_1 t}+b_1\, e^{-d_1 t},\quad v_2=a_2\, e^{ d_2 t},\quad v_3=a_3\, e^{ d_3 t}.$$
	 with $a_i,b_i,d_i>0$. Then   $R(E_2,E_3,E_1,E_4)$ is equal to:
	\begin{align*}
	\frac{  -2e^{-t(d_1 + d_2 + d_3)}  }{  a_2a_3(b_1+a_1e^{2d_1t})}\left(
	a_2^2(d_2 - d_3)e^{2t(d_1 + d_2)} - a_3^2(d_2 - d_3)e^{2t(d_1 + d_3)} + 2a_1b_1(d_2 + d_3)e^{2d_1t}\right.\\ \left. - a_1^2(2d_1 - d_2 - d_3)e^{4d1t} + (2d_1 + d_2 + d_3)b_1^2\right).
	\end{align*}
		This must be equal to a constant $k$, and hence the numerator of  $R(E_2,E_3,E_1,E_4)-k$ must vanish:
		\begin{align*}
	&  -a_1^2(2d_1 - d_2 - d_3)e^{t(3d_1 - d_2 - d_3)} + 2a_1b_1(d_2 + d_3)e^{t(d_1 - d_2 - d_3)} - a_3^2(d_2 - d_3)e^{t(d_1 - d_2 + d_3)} \\  
	&  + a_2^2(d_2 - d_3)e^{t(d_1 + d_2 - d_3)} + (2d_1 + d_2 + d_3)b_1^2e^{-t(d_1 + d_2 + d_3)} - ka_2a_3a_1e^{2d_1t} - ka_2a_3 b_1   =0.
	\end{align*}

	The exponential functions must all cancel and we can use the fact that exponentials with different exponents are linearly independent. Thus we need to distinguish the cases where some coefficient of the exponential functions vanish, or some exponents are equal to $0$, or  two or more of the exponents coincide. One easily checks that the exponent $d_1-d_2-d_3$ cannot be equal to any of the others. Thus it must vanish. Substituting $d_1=d_2+d_3$ into the equations, we get
		\begin{align*}
	&  - \left(a_1^2(d_2 + d_3) + ka_1a_2a_3\right)e^{2t(d_2 + d_3)} + 3b_1^2(d_2 + d_3)e^{-2t(d_2 + d_3)} \\  
	&   + a_2^2(d_2 - d_3)e^{2d_2t} - a_3^2(d_2 - d_3)e^{2d_3t} + (2d_2 + 2d_3)a_1b_1 - ka_2a_3b_1   =0.
	\end{align*}
	The coefficient of the second term is positive, and its exponent cannot be equal to any of the other exponents. Thus there are no solutions of this type.

	The case where two or all three functions $v_i$ are sums of two exponentials is more involved since the equations will contain many exponential functions. This case is left to the reader.

	\smallskip

	Thus we have shown that there are  no curvature homogeneous metrics when the action of $G$ has no singular orbits. 
	
		\smallskip
	
	If there are singular orbits, we conclude that there exists an interval $I$ on which the metric functions $v_i$ agree with one of the examples in Section 3, although this interval may not contain the singular orbit. There is one  exception though (under the assumption that $G=SU(2)$) where the functions are slightly more general. It has the form:
	$$
	v_1=a\sin(t),\quad v_2=a\cos(t),\quad v_3=a,  \quad 0\le t\le \pi/2.
	$$
	This metric is only smooth when $a=2$, in which case it is the metric in Example  5 on $\Sph^2\times\Sph^2$.   Notice though that when $a$ is an integer,  the metric is a smooth orbifold metric.  
	
	\bigskip

	 The following finishes the proof:
	\bigskip
	
	{\bf Proof of Theorem A}
	We saw that on any  interval $I=(a,b)$ where the eigenvalues have constant multiplicity, the metric agrees,  up to an equivariant diffeomorphism, with one of the known examples.
	 Let us first consider the case where the eigenvalues of $P_t$ on $I$ are all distinct. This means that on the interval $I$ the metric agrees with one of Example 1-5 in Section 3. Now observe that these examples have the following property. If the metric is defined on the interval  $[0,L]$ (possibly $L=\infty$)  then in fact   the metric  has distinct eigenvalues for all $0<t<L$.
		Let $(a,b)$ be a maximal connected interval  where our metric $P_t$ has 3 distinct eigenvalues.  At $t=a$ two of the eigenvalues of $P_t$ coincide by assumption, i.e. there exists a pair, say $\lambda_1,\lambda_2$  such that $\lambda_1(a)=\lambda_2(a)$. We want to show that $a=0 $, so assume that $a>0$.

	  There exists a constant $D>0$ such that for the 5 known examples $P_t^o=\diag(v_1^o,v_2^o,v_3^o)$ we have $|v_i^o(t)-v_j^o(t)|>D$ for $i\ne j$ and $t\in [a,b]$.  Now choose a small $\e>0$ such that  $|\lambda_1(t)-\lambda_2(t)|<D/2$ for $t\in (a,a+\e)$.
	    By \pref{normalizer}, the metric is equivariantly isometric to one where $P_t$ is diagonal in $(a+\e,b-\e)$.  Our proof above shows that on that interval $P_t$ agrees with one of the examples in Section 3.
	    Hence  $|v_i^o(a+\e)-v_j^o(a+\e)| =|\lambda_1(a+\e)-\lambda_2(a+\e)|<D/2$.    This is a contradiction and hence $a=0$. Similarly, it follows that $b=L$.
		
		If there is no interval with 3 distinct eigenvalues, we repeat the same  argument if there exists an interval with two distinct eigenvalues, as in  Examples 6,7,9,10,  or an interval with only one eigenvalue, as in Example 8. This finishes the Proof of Theorem A.
	
	\begin{rem*}
		In all cases, except for the Tsukada example or one of the five orbifold examples, we can also argue as follows. The remaining known examples, i.e. the metrics on a symmetric space, are Einstein. After we prove that the metric on the interval $I$ must be one of the known examples,  it follows that it is Einstein on $I$. Being curvature homogeneous implies that it is Einstein on $M$. This is an ODE along the geodesic, and by the uniqueness of solutions of this ODE starting at a point on $I$, the metric is isometric to a known example.  One can also show that the Tsukada example satisfies an ODE, but it turns out that this differential equation is significantly more complicated.
	\end{rem*}

 \providecommand{\bysame}{\leavevmode\hbox
to3em{\hrulefill}\thinspace}


\begin{thebibliography}{9999}

\bibitem[AA]{AA} A.V.~Alekseevsky and D.V.~Alekseevsky,
{\em $G$- manifolds with one dimensional orbit space,} Ad. in Sov.
Math. {\bf 8} (1992), 1--31.

\bibitem[AB]{AB}   M. Alexandrino and R. Bettiol,
{\em
Lie Groups and Geometric Aspects of Isometric Actions}, Springer 2015.

	\bibitem[Be]{Be} A. Besse, \emph{Einstein Manifolds},  Modern Surveys in Math., vol. 10, Springer-Verlag, 1987

\bibitem[BKV]{BKV} Boeckx, O. Kowalski, and L. Vanhecke, {\em Riemannian manifolds of
conullity two ,}  World Scientific Publishing Co., 1996, pp. xviii+300.

\bibitem[Br]{Br} T. Brooks, {\em Three dimensional manifolds with constant Ricci eigenvalues $(\lambda,\lambda,0)$,} in preparation.






\bibitem[FKM]{FKM} D.Ferus, H.Karcher and H.F. M\"unzner,
{\em Clifford algebras and new isoparametric hypersurfaces}, Math. Z. {\bf 177} (1981), 479-502.

\bibitem[GZ]{GZ} K.Grove-W.Ziller,
\emph{Lifting group actions and nonnegative curvature},
Trans. Amer. Math. Soc. 363 (2011) 2865-2890.

\bibitem[GVZ]{GVZ} K. Grove, L. Verdiani and W. Ziller, {\em  An exotic $T_1\Sph^4$ with positive curvature}, Geom. Funct. Anal. {\bf 21} (2011),
499-524.

\bibitem[L]{L} P.~Lax, {\em Linear Algebra and Its Applications,} 2nd. ed., Wiley-Interscience, (2007).

\bibitem[Pa]{Pa}
J.~Parker, \emph{$4$-dimensional ${G}$-manifolds with $3$-dimensional
	orbits}, Pacific J. Math. \textbf{125} (1986), 187--204.

\bibitem[Se]{Se}  K. Sekigawa, {\em On the Riemannian manifolds of the form $B_f  \times F^n$ ,}  Kodai
Math. Sem. Rep. 26 (1974/75), pp. 343--347.

\bibitem[Ta]{Ta}   H. Takagi,, {\em On curvature homogeneity of Riemannian manifolds,} Tohoku Math.J. 26 (1974),
581-585.

\bibitem[Si]{Si}  I. M. Singer, {\em Infnitesimally homogeneous spaces ,}  Comm. Pure Appl. Math.
13 (1960), pp. 685--697.

\bibitem[Ts]{Ts} K. Tsukada, {\em Curvature homogeneous hypersurfaces immersed in a real space form,}
Tohoku Math. J. 40 (1988), 221--244.



\bibitem[TV]{TV} F.~Tricerri, L.~Vanhecke, {\em Curvature homogeneous Riemannian manifolds,}
Ann. scient. Ec. Norm. Sup. {\bf 22} (1989), 535--554.


\bibitem[KTV]{KTV} 0. Kowalski, F.~Tricerri, L.~Vanhecke, {\em
Curvature homogeneous Riemannian manifolds,} J.
Math. Pures Appl. 71 (1992) 471-501.

\bibitem[V]{V} L.~Verdiani, {\em Curvature homogeneous metrics of cohomogeneity one,}
Riv. Mat. Univ. Parma {\bf 6} (1997), 179--200.

\bibitem[VZ]{VZ} L.~Verdiani,W.~Ziller, {\em Smoothness conditions in cohomogeneity one manifolds,}
Transf. Groups, DOI 10.1007/s00031-020-09618-9 (2020).

\bibitem[Zi]{Zi} W.Ziller,
\emph{On the geometry of cohomogeneity one manifolds with positive curvature},
in: Riemannian Topology and Geometric Structures
on Manifolds, in honor of Charles P.Boyer's 65th
birthday, Progress in Mathematics 271 (2009), 233--262.

\end{thebibliography}
\end{document}